\DeclareMathOperator{\supp}{supp}
\DeclareMathOperator{\id}{id}
\DeclareMathOperator{\Fl}{Fl}
\newcommand{\CG}{{\mathcal{G}}}          
\newcommand{\CEF}{{\mathcal{E}}}            
\newcommand{\CGF}{{\mathcal{G}}}     
\newcommand{\OCA}[1]{{\mathcal A}_{#1}}       
\newcommand{\CNF}{{\mathcal{N}}}            
\newcommand\mol{\phi}
\newcommand{\cD}{\mathcal{D}}               
\newcommand{\eps}{\varepsilon}              
\newcommand{\abso}[1]{\left\lvert#1\right\rvert}
\newcommand{\Nat}{\mathbb{N}}                
\newcommand{\Lie}{\mathcal{L}}              
\def\CE{{\mathcal E}}               
\def\CMF{{\mathcal E}_M}            
\newcommand{\coleq}{\mathrel{\mathop:}=}
\begin{document}

\newcommand{\rd}{\mathrm{d}}
\newcommand{\dx}{\rd \x}
\newcommand{\dy}{\rd \y}

\newtheorem{theorem}{\bf Theorem}
\newtheorem{definition}[theorem]{\bf Definition}
\newtheorem{condition}[theorem]{\bf Condition}
\newtheorem{corollary}[theorem]{\bf Corollary}
\newtheorem{proposition}[theorem]{\bf Proposition}
\newtheorem{remark}[theorem]{\bf Remark}


\newcommand{\e}{\varepsilon}

\def\up#1{{}^{#1}}                
\def\dn#1{{}_{#1}}                
\def\du#1#2{{}_{#1}{}^{#2}}       
\def\ud#1#2{{}^{#1}{}_{#2}}       
\def\st#1#2{{}^{#1}_{#2}}         
\def\gt#1#2#3#4{\st{#1\ldots#2}{#3\ldots#4}}

%
\def\half{{\scriptstyle{1 \over 2}}}
\let\leq=\leqslant                
\let\le=\leqslant
\let\geq=\geqslant
\let\ge=\geqslant
\def\Real{{\Bbb R}}               
\def\compact{\subset\subset}      
\let\embed=\iota                  
\def\tR{{\tilde R}}
\def\tg{{\tilde{\mathcal G}}}
\def\tCD{{\tilde{\mathcal D}}}      
\def\CT{{\mathcal T}}               
\def\CK{{\mathcal K}}               
\def\TM{{\mathfrak{X}}}              
\def\tensor{{\mathcal T}}           
%
%
\def\mol{\phi}
\def\sk{\omega}
\def\skk{\omega}
\def\SK{{\mathrm{SK}}}
\def\SO{\Phi}

%
\def\HCA{\hat {\mathcal A}_0}       
\def\CCA{\check{\mathcal A}}        
\newcommand{\atil}{\ensuremath{\tilde{\mathcal{A}}_0(M)} } 
\def\CEM{\hat \CE}              
\def\MEF{\hat\CE}              
\def\CA#1{\tilde{\mathcal A}_{#1}}  
\def\MC{\tilde{\mathcal A}}  
\def\CM{{{\hat{\mathcal E}_M}}}       
\def\CN{\hat{\mathcal N}}               
\def\CGM{\hat{\mathcal G}}          
\def\hCG{\check{\mathcal G}}          
%
%
\def\delo{{\mathop{\nabla}\limits^0}} 
\def\tLie{\tilde  \Lie}         
\def\hLie{\Lie'}            
\def\gLie{\hat \Lie}        
\def\dLie{\Lie}             
\def\Lief{\Lie^{C^\infty}}
\def\Lien{\Lie^{\Omega^n}}
\def\Liesk{\Lie^{\mathrm{SK}}}
\def\Liet{\tilde \Lie}           
\def\pbyp#1#2{{\partial#1\over\partial#2}}
\def\gcov{\hat\nabla}       
\def\gelo{\hat\delo}        
%
%
\def\transp{\Upsilon}           
\def\gls{{\Upsilon_*}\du}       
\def\gus{{\Upsilon^*}\ud}       
\def\met{{\mathcal M}}          

%
%
\def\emph#1{{\it#1}}
\def\x{{\boldsymbol x}}
\def\y{{\boldsymbol y}}
\def\bi{{\boldsymbol i}}
\def\k{{\boldsymbol k}}
\def\l{{\boldsymbol l}}
\def\z{{\boldsymbol z}}

\newcommand{\Ll}{L_{\mbox{\rm\small loc}}}
\newcommand{\Hl}{H_{\mbox{\rm\small loc}}}
\newcommand{\pa}{\partial}



\title{Nonlinear generalised functions on manifolds}

\author{E.~A.~Nigsch}
\address{E.~A.~Nigsch, Institut f\" ur Mathematik, Universit\" at Wien, Vienna, Austria}
\email{eduard.nigsch@univie.ac.at}

\author{J.~A.~Vickers}
\address{J.~A.~Vickers, School of Mathematics, University of Southampton, Southampton SO17 1BJ, UK}
\email{J.A.Vickers@soton.ac.uk}

\subjclass[2010]{46F30, 46T30}

\keywords{nonlinear generalised functions, Colombeau algebra, diffeomorphism invariant}


\begin{abstract}
  This paper lays the foundations for a nonlinear theory of
  differential geometry that is developed in a subsequent paper
  \cite{diffgeom} which is based on Colombeau algebras of tensor
  distributions on manifolds. We adopt a new approach and construct a
  global theory of algebras of generalised functions on manifolds
  based on the concept of smoothing operators. This produces a
  generalisation of previous theories in a form which is suitable for
  applications to differential geometry. The generalised Lie
  derivative is introduced and shown to commute with the embedding of
  distributions. It is also shown that the covariant derivative of a
  generalised scalar field commutes with this embedding at the level
  of association.
\end{abstract}

\maketitle

\section{Introduction}

The classical theory of distributions has proved a very powerful tool
in the analysis of linear partial differential equations. However, the
fact that in general one cannot multiply distributions makes them of
limited use in theories such as general relativity whose underlying
equations are inherently nonlinear. Geroch and Traschen \cite{GT}
identified a class of \emph{regular metrics} for which the components
of the curvature tensor are well defined as distributions and
showed that such regular metrics have curvature with singular support
on a manifold of co-dimension at most one. Thus, one can describe
shells of matter but not strings or particles with metrics in this
class. However, by going outside conventional distribution theory
Colombeau \cite{col1} showed that it is possible to construct
associative, commutative differential algebras which contain the space
of distributions as a linear subspace and the space of smooth
functions as a subalgebra. Colombeau's theory of \emph{generalised
  functions} has therefore increasingly had an important role to play
in general relativity, enabling one to use distributions in situations
where one has ill defined products according to the classical theory,
but without having to resort to ad hoc regularisation
procedures. Applications of Colombeau's theory to general relativity
have included the calculation of nonlinear distributional curvatures
which correspond to metrics of low differentiability, such as those
which occur in space-times with thin cosmic strings \cite{cvw} and
Kerr singularities \cite{balasin}, and the electromagnetic field
tensor of the ultra-relativistic Reissner-Nordstr{\" o}m solution
\cite{steinbauer}. For a review of applications of Colombeau algebras
to general relativity see \cite{SV}.

The basic idea is to represent generalised functions by families of
smooth functions. In the special version of the theory the Colombeau
algebra is denoted $\CG^s$ and the basic space used for its
construction consists of 1-parameter families $(f_\eps)_{\e \in (0,1]}$
of smooth functions. However, this results in many different
representations of what is essentially the same function so that one
identifies families which differ by something \emph{negligible}, i.e.,
by a family of functions whose derivatives vanish faster than any
power of $\eps$ on any compact set. This identification is
  realised by factoring out by the set of such
functions, but this is not an ideal unless one restricts the basic
space to families of \emph{moderate} functions whose derivatives are
bounded on compact sets by some positive power of $1/\eps$. The
(special) algebra of generalised functions is therefore defined to be
moderate functions modulo negligible functions, see \cite{col2} for
more details.

Despite factoring out by negligible functions, the notion of
generalised function within Colombeau algebras is finer than that
within conventional distribution theory, and it is this feature that
enables one to circumvent Schwartz's result on the impossibility of
multiplying distributions \cite{impossibility}. Although the pointwise
product of smooth functions commutes with the embedding into the
algebra, the pointwise product of continuous functions does not (and
indeed this cannot be the case due to the Schwartz impossibility
result). However, an important feature of Colombeau algebras is an
equivalence relation known as \emph{association} which coarse grains
the algebra. At the level of association the pointwise product of
continuous functions does indeed commute with the
embedding. Furthermore, many (but not all) elements of the algebra are
associated to conventional distributions. This feature has the
advantage that in many cases one may use the mathematical power of the
differential algebra to perform classically ill-defined calculations
but then use the notion of association to give a physical
interpretation to the answer.

Unfortunately, the special algebra suffers from the disadvantage that
there is no canonical embedding of distributions into it. In some
situations this is not a problem because some mathematical or physical
feature of the problem may be used to define a preferred
embedding. However, in general there is no such preferred embedding
into the special algebra, so in section 2 we will briefly describe the
{\it full} Colombeau algebra $\CGF$ in which the generalised functions
are parameterised by elements $\mol$ of a space of mollifiers
$\OCA{k}$. This enables one to define an associative commutative
differential algebra on $\Real^n$ which contains the space of smooth
functions as a subalgebra and has a {\it canonical} embedding of the
space of distributions as a linear subspace. Furthermore, the
embedding commutes with (distributional) partial derivatives. Within
$\CGF$ one also has a notion of association which may be used to give
a distributional interpretation to certain generalized functions. This
algebra was used in \cite{cvw} to show that the curvature of a cone is
associated to a multiple of the delta distribution.

Although the full Colombeau algebra on $\Real^n$ permits a canonical
embedding of the space of distributions as a linear subspace,
this has been bought at the price of giving up manifest coordinate
invariance. Indeed, the definition of the spaces $\OCA{k}$ of
mollifiers which are used to define the algebra is coordinate
dependent. One approach to this problem is to regard the use of the
Colombeau algebras as a purely intermediate part of the
construction. For example, in the case of the cone one starts with the
metric in a given coordinate system, calculates the regularised metric
and uses this to calculate the curvature density in $\CGF$. One can
then show that the result is associated to a multiple of the delta
distribution and that furthermore if one repeats the entire
calculation in a different coordinate system the final result is just
the transformed delta distribution (see \cite{VW1} for details).

However, there exist situations in which the generalised functions one obtains are not associated to any distribution and in which it is desirable to have a coordinate invariant generalisation of the full algebra.  Such an algebra was first proposed by Colombeau and Meril \cite{colmeril}.  Their approach was to give a local description of the algebra together with a transformation law for the generalised functions which ensures that the embedding into the algebra commutes with coordinate transformations.  This work suffered from some technical problems but building on these ideas it was shown that one can construct a global Colombeau algebra of generalised functions on manifolds (see \cite{advances} for details) retaining all the distinguishing features of the local theory in the global context. In section 3 we will present a new version of the algebra based on the idea of smoothing operators. This has a larger basic space than \cite{advances} which allows us to define a covariant derivative and can therefore be developed into a nonlinear theory of distributional differential geometry \cite{diffgeom}. In contrast to the theory on $\Real^n$ the theory of generalised functions on manifolds involves a number of technical issues involving in particular the theory of differentiation in locally convex spaces. We will not go into the details here, but the approach will be to use the \emph{convenient setting of global analysis} of \cite{KM}.

For applications of the algebra to general relativity we are interested in Einstein's equations for metrics of low differentiability. These metrics are tensorial rather than scalar objects. Because the embedding into the algebra does not commute with multiplication (except on the subalgebra of smooth functions) one cannot simply work with the coordinate components of a tensor and use the theory of generalised scalars. In a subsequent paper \cite{diffgeom} we show how it is possible to define an algebra of generalised tensor fields on a manifold which contains the spaces of smooth tensor fields as a subalgebra and has a canonical coordinate independent embedding of the spaces of tensor distributions as linear subspaces. 

In order to make the presentation self-contained we begin in this paper by briefly reviewing the Colombeau theory of generalised functions on $\Real^n$ emphasising the structural issues that will be important in generalising this to manifolds.

\section{The full Colombeau algebra on $\Real^n$}

In this section we briefly describe the construction of the full Colombeau algebra in $\Real^n$ (for further details and proofs see \cite{col1}). The starting point is the observation that one can smooth functions by taking the (anti)-convolution with a suitable mollifier. Let $\cD(\Real^n)$ denote the space of smooth functions on $\Real^n$ with compact support. We define $\OCA{0}(\Real^n)$ to be the set of those $\mol \in \cD(\Real^n) $ which satisfy the normalisation condition
\begin{equation*}
\int_{\Real^n} \mol(\x)\, \dx=1.
\end{equation*}
Given $\eps >0$ we set
\begin{equation*}
\mol_{\eps}(\x)=\frac{1}{\eps^n}\mol\left({\x \over \eps}\right), 
\end{equation*}
so that $\mol_\eps$ has support scaled by $\eps$ and its amplitude adjusted so that its integral is still one. 

Note that $(\mol_{\e})_\e$ is an example of a net of smooth functions with the delta distribution as its limit in the sense that
\begin{equation*}
\lim_{\eps \to 0}\int_{\Real^n}\mol_{\eps}(\x)\Psi(\x)\,\dx=\Psi(0) \quad \forall \Psi \in \cD(\Real^n).
\end{equation*}
This is sometimes called a model delta net (see \cite{MO}). Provided $f \in L^1_{\textrm{loc}}$ (i.e., $f$ is a locally integrable function), for each $\phi \in \OCA{0}(\Real^n)$ we can define a 1-parameter family of smooth functions $\tilde f_\eps$ by
\begin{equation}
\tilde f_\eps(\x)=\int_{\Real^n}f(\y)\mol_\eps(\y-\x) \,\dx \label{4}
\end{equation}
which converges to $f$ in $\cD'(\Real^n)$. However, in what follows it will be important to regard $\mol$ as well as $\eps$ as a parameter so we write expression \eqref{4} as $\widetilde f(\mol_\eps, \x)$.

It will also be convenient to introduce the {\it translation operator} $\tau$ defined by
\begin{equation*}
\left(\tau_{\x}\mol\right)(\y)=\mol(\y-\x) 
\end{equation*}
for $x,y \in \Real^n$ and $\mol \in \cD(\Real^n)$. In order to match the notation of the theory on manifolds we will sometimes write $\mol_{\x,\eps}=\tau_{\x}\mol_{\eps}$, so that for fixed $\x$, $\mol_{\x,\eps}$ is a 1-parameter family of smooth functions converging in $\cD'(\Real^n)$ to $\delta_{\x}$, the delta distribution at $\x$.

A distribution $T \in \cD'(\Real^n)$ is a linear functional on the space of smooth test functions $\cD(\Real^n)$ and we may generalise equation \eqref{4}  to distributions by defining a 1-parameter family of smooth functions $\tilde T_\e$ by
\begin{equation}
\tilde T_\e=\langle T, \tau_{\x}\mol_\eps \rangle.
\end{equation}
Again, we will write this expression as $\tilde T(\phi_\e, \x)$.

In order to construct the algebra of generalised functions we define a grading on the space of mollifiers in terms of moment conditions. Note that we will throughout use multi-index notation so that $\bi=(i_1, \dots i_n)$ and $\x^\bi=x_1^{i_1}\dots x_n^{i_n}$.

\begin{definition}
For $q \in \Nat$ we define $\OCA{q}(\Real^n)$ to be the set of functions $\mol \in \OCA{0}(\Real^n)$ such that
\begin{equation*}
\int_{\Real^n} \x^\bi \mol(\x)\, \dx=0 \quad \forall \bi \in \Nat^n_0 \textrm{ with } \abso{\bi} \leq q.  
\end{equation*}
\end{definition}

We are now in a position to construct the full Colombeau algebra on $\Real^n$. Our basic space will be the following.
\begin{definition}
$\CEF(\Real^n)$ is defined to be the set of all maps
\begin{align*}
F \colon \OCA{0}(\Real^n) \times \Real^n &\to \Real \\
(\mol, \x) &\mapsto F(\mol, \x) 
\end{align*}
which for fixed $\mol$ are smooth as functions of $\x$.
\end{definition}

The lack of any continuity requirement with respect to $\mol$ reflects their role as parameters rather than test functions.

On $\CEF(\Real^n)$ we may define the product $FG$ by
\begin{equation*}
(FG)(\mol, \x)=F(\mol, \x)G(\mol, \x) 
\end{equation*}
and the derivative operation
\begin{equation*}
(\partial_iF)(\mol, \x)={\partial \over {\partial x^i}}\left(F(\mol, \x)\right) 
\end{equation*}
for $i=1 \dotsc n$, which together give $\CEF(\Real^n)$ the structure of a differential algebra.

However, as it stands, the space $\CEF(\Real^n)$ is much too large and, thinking in terms of the limit $\e \to 0$, contains many representations of what are essentially the same functions. For example, to represent a given smooth function $f \in C^\infty(\Real^n)$ we may define $\tilde f \in \CEF(\Real^n)$ by
\begin{equation}
\tilde f(\mol_\e, \x)=\int_{\Real^n} f(\y) \mol_\e(\y-\x) \, \dy \label{8}
\end{equation}
but since $f$ is smooth we can also define another family $\hat f(\mol, \x)$ (which does not in fact depend on $\mol$) by
\begin{equation}
\hat f(\mol_\e, \x)=f(\x). \label{9}
\end{equation}
Note that in the above equations \eqref{8} and \eqref{9} we have chosen to use the scaled mollifiers $\mol_\eps$. Strictly speaking, however, when using these equations to define elements of $\CEF(\Real^n)$ one uses a general mollifier $\phi \in \OCA{0}(\Real^n)$ (see below for details).

We therefore want to introduce an equivalence relation such that $\tilde f$ and $\hat f$ (and their derivatives) become equivalent. Expanding $\tilde f(\mol_\eps, \x)$ in a Taylor series and using the moment conditions for $\mol \in \OCA{k}(\Real^n)$ we see that
\begin{align*}
   \tilde f(\mol_\eps,\x) &= {1\over\eps^n} \int_{\Real^n} f(\y)
   \mol\left({\y-\x\over\eps}\right) \,d\y \\
   &= \int_{\Real^n} f(\x+\eps \y) \mol(\y) \, d\y \\
   &= \int_{\Real^n} \left\{ f(\x) + \sum_{\abso{\l}=1}^q {\eps^{\abso{\l}}\y^\l\over
{\abso{\l}!}}\partial^\l f(\x) \right.\\
   &\quad +\left. \sum_{\abso{\l} = q+1} \frac{q+1}{\l!} \e^{q+1} \y^\l \int_0^1 (1-t)^q \partial^\l f(x+t\e y)\,\rd t \right\} \phi(\y) \rd\y \\
   &= f(\x) + \eps^{q+1} \sum_{\abso{\l} = q+1} \frac{q+1}{\l!} \int_{\Real^n} \int_0^1 \y^\l
   (1-t)^q \cdot \\
   &\qquad \qquad \cdot \partial^\l f(\x+t \eps \y) \phi(\y) \,\rd t\, \rd\y \\
   &= \hat f(\mol_\eps, \x) + O(\eps^{q+1})
\end{align*}
where $\partial^\l$ is the derivative operator given by $\partial^\l=\partial_1^{l_1}\dots \partial_n^{l_n}$. Thus, by choosing $\mol$ to be in $\OCA{k}(\Real^n)$ for suitably large $k$ we can make $\tilde f - \hat f$ tend to zero like an arbitrary power of $\eps$. Requiring a similar condition for the derivatives motivates the following definition.

\begin{definition}[Negligible functions]\label{def_negl}
$\CNF(\Real^n)$ is defined to be the set of functions $F \in \CEF(\Real^n)$ such that for all compact $K \subset \Real^n$, for all $\k \in \Nat^n_0$ and for all $m \in \Nat$, there is some $q \in \Nat$ such that if $\mol \in \OCA{q}(\Real^n)$ then 
\begin{equation*}
\sup_{\x \in K}\abso{\partial^\k F(\mol_\eps, \x)} = O(\eps^m) 
\quad\textrm{as }\eps \to 0. 
\end{equation*}
\end{definition}
Note that the derivative $\partial^\k$ acts only on the $x$-variable here, contrary to the situation later on where we also have to consider derivatives with respect to $\phi$.

The key result that follows from this is that for a smooth function $f$ we have that $\tilde f -\hat f$ is in $\CNF(\Real^n)$. However, in order to define an algebra we would like to factor out by $\CNF(\Real^n)$, and this requires it to be an ideal. Unfortunately, this is not the case because we can multiply elements of $\CNF(\Real^n)$ by elements of $\CEF(\Real^n)$ with rapid non-polynomial growth in $1/\eps$ so that the conditions of Definition \ref{def_negl} are no longer satisfied. We therefore restrict $\CEF(\Real^n)$ to the subalgebra of functions of moderate growth in the following sense.
\begin{definition}
$\CMF(\Real^n)$ is defined to be the set of functions $F \in \CEF(\Real^n)$ such that for all compact $K \subset \Real^n$, for all $\k \in \Nat^n_0$, there is some $N \in \Nat$ such that if $\mol \in \OCA{N}(\Real^n)$ then
\begin{equation*}
\sup_{\x \in K}\abso{\partial^\k F(\mol_\eps, \x)} = O(\eps^{-N})
\quad\textrm{as }\eps \to 0.  
\end{equation*}
\end{definition}

\begin{proposition}
$\CNF(\Real^n)$ is an ideal in $\CMF(\Real^n)$. 
\end{proposition}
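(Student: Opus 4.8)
The plan is to establish the ideal property in three stages: the inclusion $\CNF(\Real^n) \subseteq \CMF(\Real^n)$, closure of $\CNF(\Real^n)$ under addition, and the absorption property $\CMF(\Real^n) \cdot \CNF(\Real^n) \subseteq \CNF(\Real^n)$. Throughout, the crucial structural fact I would exploit is that the mollifier spaces are nested decreasingly, $\OCA{q}(\Real^n) \subseteq \OCA{q'}(\Real^n)$ whenever $q \ge q'$, since a higher value of $q$ imposes strictly more vanishing-moment conditions. This nesting lets me replace several mollifier-order thresholds coming from different estimates by a single one, namely their maximum, so that finitely many estimates can be made to hold simultaneously.

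For the inclusion, given $F \in \CNF(\Real^n)$, a compact set $K$ and a multi-index $\k$, I would apply the negligibility condition with (say) $m = 1$ to obtain a $q$ with $\sup_{\x\in K}\abso{\partial^\k F(\mol_\eps,\x)} = O(\eps)$ for $\mol \in \OCA{q}(\Real^n)$; since $O(\eps) \subseteq O(\eps^{-q})$ as $\eps \to 0$, taking $N = q$ verifies moderateness. Additive closure is equally direct: for $F, G \in \CNF(\Real^n)$ and prescribed $K, \k, m$, I would pick thresholds $q_F, q_G$ for $F$ and $G$ and use $q = \max(q_F, q_G)$, so that for $\mol \in \OCA{q}(\Real^n)$ --- which by nesting lies in both $\OCA{q_F}(\Real^n)$ and $\OCA{q_G}(\Real^n)$ --- both estimates hold and the triangle inequality gives the $O(\eps^m)$ bound for $F + G$.

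The substance of the proof is the absorption property, which I would handle with the Leibniz rule
\begin{equation*}
\partial^\k(FG) = \sum_{\k' \le \k} \binom{\k}{\k'} (\partial^{\k'}F)(\partial^{\k-\k'}G),
\end{equation*}
taking $F \in \CMF(\Real^n)$ and $G \in \CNF(\Real^n)$ and fixing $K$, $\k$ and a target power $m$. For each of the finitely many $\k' \le \k$, moderateness of $F$ supplies an integer $N_{\k'}$ with $\sup_{\x\in K}\abso{\partial^{\k'}F(\mol_\eps,\x)} = O(\eps^{-N_{\k'}})$ for $\mol \in \OCA{N_{\k'}}(\Real^n)$; set $N = \max_{\k'\le\k} N_{\k'}$. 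I would then feed the enlarged target $m + N$ into the negligibility of $G$, obtaining for each $\k'$ an order $q_{\k'}$ such that $\sup_{\x\in K}\abso{\partial^{\k-\k'}G(\mol_\eps,\x)} = O(\eps^{m+N})$ for $\mol \in \OCA{q_{\k'}}(\Real^n)$. Finally I set $q = \max(N, \max_{\k'\le\k} q_{\k'})$: for any $\mol \in \OCA{q}(\Real^n)$ the nesting guarantees that every moderate and every negligible estimate above is valid at once, so each summand is $O(\eps^{-N_{\k'}}) \cdot O(\eps^{m+N}) = O(\eps^{m + N - N_{\k'}}) = O(\eps^m)$, using $N - N_{\k'} \ge 0$. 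Summing the finitely many terms yields $\sup_{\x\in K}\abso{\partial^\k(FG)(\mol_\eps,\x)} = O(\eps^m)$, so $FG \in \CNF(\Real^n)$.

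The main obstacle is precisely this interlocking quantifier structure: the moderate factor $\partial^{\k'}F$ costs up to $N$ negative powers of $\eps$, and the only mechanism to recover them is to demand extra decay from the negligible factor. The trick is to \emph{first} read off the worst moderate loss $N$ across all derivatives $\k' \le \k$, and only \emph{then} invoke negligibility at the inflated order $m + N$; doing this in the wrong order, or failing to unify the various mollifier-order thresholds via the nesting $\OCA{q}(\Real^n) \subseteq \OCA{q'}(\Real^n)$, is where a naive argument would break down. Everything else --- smoothness in $\x$ of the product and finiteness of the Leibniz sum --- is routine.
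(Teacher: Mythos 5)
Your proof is correct. The paper itself offers no proof of this proposition (it defers to \cite{col1}), and your argument --- the Leibniz rule together with the careful quantifier ordering (extract the worst moderate loss $N$ over all $\k'\le\k$ first, then invoke negligibility of $G$ at the inflated order $m+N$), with the finitely many mollifier-order thresholds unified through the nesting $\OCA{q}(\Real^n)\subseteq\OCA{q'}(\Real^n)$ for $q\ge q'$ --- is precisely the standard verification that the cited reference supplies, so there is nothing to add.
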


We may therefore define the space of generalised functions $\CGF(\Real^n)$ as a factor algebra.

\begin{definition}[Generalised functions]
\begin{equation*}
\CGF(\Real^n)=\CMF(\Real^n)/\CNF(\Real^n).
\end{equation*}
\end{definition}

Although the definition of a negligible function $F$ requires estimates for the derivatives $\abso{\partial^\k F(\mol_\eps, \x)}$ these are in fact not needed as is shown by the following useful proposition.
\begin{proposition}\label{noderiv}
Let $F \in \CMF(\Real^n)$ be such that for all compact $K \subset \Real^n$, for all $m \in \Nat$, there is some $q \in \Nat$ such that if $\mol \in \OCA{q}(\Real^n)$ then 
\begin{equation*}
\sup_{\x \in K}\abso{F(\mol_\eps, \x)} = O(\eps^m) \quad\hbox{as $\eps \to 0$}. 
\end{equation*}
Then $F \in \CNF(\Real^n)$.
\end{proposition}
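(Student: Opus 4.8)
The plan is to prove the statement by induction on the order $\abso{\k}$ of the derivative, using the hypothesis as the base case and exploiting the moderateness of $F$ to control the remainder in a Taylor expansion. The underlying idea is that in this scaling differentiation is ``almost free'': one derivative can be traded for a loss of a \emph{fixed} power of $\eps$ (supplied by moderateness) against a gain of an \emph{arbitrary} power of $\eps$ (supplied by the lower-order estimate), and the two can be balanced by an appropriate choice of step size.

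First I would fix a compact set $K \subset \Real^n$, a multi-index $\k$ and a target order $m \in \Nat$, and enlarge $K$ to a compact neighbourhood $K'$ (for instance its closed unit neighbourhood). The inductive claim is that for every compact set, every multi-index of order at most $j$, and every target order, a suitable $q$ exists; the case $j = 0$ is exactly the hypothesis. For the inductive step write $\k = \l + e_i$ with $\abso{\l} = \abso{\k} - 1$. Moderateness of $F$, applied on $K'$ to the derivative $\partial^{\l + 2 e_i}$ of order $\abso{\k}+1$, yields some $N$ with $\sup_{K'}\abso{\partial^{\l + 2e_i} F(\mol_\eps,\cdot)} = O(\eps^{-N})$ whenever $\mol \in \OCA{N}$. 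The inductive hypothesis, applied on $K'$ to $\partial^\l$ at target order $2m + N$, yields some $q_1$ with $\sup_{K'}\abso{\partial^\l F(\mol_\eps,\cdot)} = O(\eps^{2m+N})$ whenever $\mol \in \OCA{q_1}$.

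The key step is the finite-difference identity from Taylor's theorem: for $\x_0 \in K$ and $0 < h \le 1$,
\[
\partial^\k F(\mol_\eps,\x_0) = \frac{\partial^\l F(\mol_\eps,\x_0 + h e_i) - \partial^\l F(\mol_\eps,\x_0)}{h} - \frac{h}{2}\,\partial^{\l + 2 e_i} F(\mol_\eps,\xi)
\]
for some $\xi$ on the segment $[\x_0, \x_0 + h e_i] \subset K'$. Taking $\mol \in \OCA{q}$ with $q = \max(q_1, N)$, so that by the nesting $\OCA{q} \subset \OCA{q_1} \cap \OCA{N}$ both estimates hold for the same $\mol$, and choosing $h = \eps^{m+N}$, I obtain
\[
\sup_{K}\abso{\partial^\k F(\mol_\eps,\cdot)} \le \frac{2}{\eps^{m+N}}\,O(\eps^{2m+N}) + \frac{\eps^{m+N}}{2}\,O(\eps^{-N}) = O(\eps^m),
\]
which closes the induction and shows $F \in \CNF(\Real^n)$.

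The main obstacle is essentially bookkeeping rather than analysis: one must coordinate the enlarged compact set $K'$, the inductive target order $2m+N$, and the step size $h = \eps^{m+N}$ so that both competing exponents land at least at $m$, and one must ensure a \emph{single} mollifier $\mol$ lies in all the relevant $\OCA{\cdot}$ at once --- which is precisely what the nesting of the mollifier spaces guarantees. Beyond Taylor's theorem with remainder, no genuinely hard estimate is required.
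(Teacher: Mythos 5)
Your proof is correct, and it is essentially the argument the paper relies on: the paper itself gives no inline proof but defers to \cite[Theorem 1.4.8]{book}, whose proof is precisely this induction on derivative order combined with the Taylor (finite-difference) identity and a balancing choice of step size $h$ trading the fixed moderateness loss $\eps^{-N}$ against the arbitrary low-order gain. Your bookkeeping --- enlarging $K$ to $K'$, taking the inductive target $2m+N$, choosing $h=\eps^{m+N}$, and using the nesting $\OCA{q}\subseteq\OCA{q_1}\cap\OCA{N}$ to get a single admissible mollifier --- is sound throughout.
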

Hence any moderate function which satisfies the negligibility condition, without differentiating, is negligible. For the proof see \cite[Theorem 1.4.8]{book}.

One may now show that one has an embedding 
\begin{align*}
\iota \colon \cD'(\Real^n) &\to \CGF(\Real^n) \\
T &\mapsto [\tilde T]
\end{align*}
where $[\tilde T]$ denotes the equivalence class of $\tilde T \in \CMF(\Real^n)$ and $\tilde T(\mol, \x) \coleq \langle T, \tau_{\x}\mol \rangle$. The only thing we need to establish is that $\tilde T$ is moderate.

As we may assume without limitation of generality that $\tilde T = \partial^\l f$ for some continuous function $f(\y)$ in a neighborhood of a given compact set, differentiating the expression for $\tilde T$ with respect to $\x$ we obtain
\[ \partial^\k \tilde T(\mol_\e, \x) = \langle \partial^\l_\y f, \partial^\k_\x ( \tau_x \mol_\e ) \rangle = \langle f, (-1)^{\abso{\l}} \partial_\y^\l \partial_\x^\k (\tau_\x \mol_\e) \rangle. \]
Now $\tau_\x \mol_\e(\y) = \frac{1}{\e^n} \mol\left(\frac{\y-\x}{\e}\right)$ so that
\[ (-1)^{\abso{\l}} \partial_\y^\l \partial_\x^\k ( \tau_\x \mol_\e(\y)) = \frac{1}{\e^{n+\abso{\l} + \abso{\k}}} (-1)^{\abso{\l} + \abso{\k}} \mol^{(\k+\l)}\left(\frac{\y-\x}{\e}\right). \]
Thus, uniformly for $x$ in a compact set we have
\begin{align*}
\partial^\k \tilde T(\mol_\e, \x) & = \frac{1}{\e^{n+\abso{\l} + \abso{\k}}} \int f(y) (-1)^{\abso{\l}+\abso{\k}} \mol^{(\k+\l)} \left(\frac{\y-\x}{\e}\right)\,\dy \\
& = \frac{1}{\e^{\abso{\l} + \abso{\k}}} \int f(\x + \e\y) (-1)^{\abso{\l} + \abso{\k}} \mol^{(\k+\l)}(\y)\,\dy = O(\e^{-\abso{\l} - \abso{\k}})
\end{align*}
so that $\tilde T$ is moderate.

The main properties of $\CGF(\Real^n)$ are contained in the following proposition. For proofs and further details see \cite{col1} and  \cite{MO}.
\begin{proposition}
\leavevmode
\begin{enumerate}[label=(\alph*)]
  \item $\CGF(\Real^n)$ is an associative commutative differential algebra.
  \item The embedding $\iota$ defined above embeds $\cD'(\Real^n)$ as a linear subspace.
  \item For smooth functions $f \in C^{\infty}(\Real^n)$ we have $\iota(f)=[\hat f]$ where $\hat f(\mol, \x)=f(\x)$, so that $\CGF(\Real^n)$ contains the space of smooth functions as a subalgebra.
  \item The embedding commutes with (distributional) partial differentiation so that
\[ \iota(\partial^\k T)(\mol, \x)=\partial^\k(\iota T)(\mol,\x). \]
 \end{enumerate}
\end{proposition}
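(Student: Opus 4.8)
The plan is to dispatch the four parts in turn, leaning on the structural results already in place. For part (a) I would first verify that $\CMF(\Real^n)$ is a subalgebra of $\CEF(\Real^n)$ that is closed under the derivations $\partial_i$: the product bound follows from the Leibniz rule together with the moderateness estimates, and closure under $\partial_i$ is immediate because the moderate condition is quantified over all multi-indices $\k$. The stated Proposition that $\CNF(\Real^n)$ is an ideal in $\CMF(\Real^n)$ then furnishes the ring structure on the quotient, and I would add the separate observation that $\partial_i$ maps $\CNF(\Real^n)$ into itself---again immediate from Definition \ref{def_negl}, which controls every derivative---so that the derivations and the Leibniz rule descend to $\CGF(\Real^n)$. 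Associativity and commutativity are simply inherited from the pointwise operations on $\CEF(\Real^n)$.

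For part (b), linearity of $\iota$ is clear since $\tilde T(\mol,\x)=\langle T,\tau_\x\mol\rangle$ is linear in $T$, and the computation already carried out in the excerpt shows $\tilde T\in\CMF(\Real^n)$, so $\iota$ is a well-defined linear map into $\CGF(\Real^n)$. The only substantive point is injectivity. Suppose $\iota(T)=0$, that is $\tilde T\in\CNF(\Real^n)$, and fix a test function $\psi$. Choosing $\mol\in\OCA{q}(\Real^n)$ with $q$ large enough for the negligibility estimate to hold on $\supp\psi$, I would combine two facts: negligibility gives $\tilde T(\mol_\e,\cdot)\to 0$ uniformly on $\supp\psi$, hence $\langle\tilde T(\mol_\e,\cdot),\psi\rangle\to 0$; while the model-delta-net property of $(\mol_\e)_\e$ gives $\tilde T(\mol_\e,\cdot)\to T$ in $\cD'(\Real^n)$, so that $\langle T,\psi\rangle=\lim_\e\langle\tilde T(\mol_\e,\cdot),\psi\rangle=0$. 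Since $\psi$ is arbitrary, $T=0$.

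For part (c), $\iota(f)=[\tilde f]$ holds by definition of $\iota$, and the Taylor expansion of $\tilde f(\mol_\eps,\x)$ displayed in the excerpt shows that $\tilde f-\hat f$ satisfies the undifferentiated negligibility estimate; applying the same argument to each $\partial^\k f$ (or invoking Proposition \ref{noderiv}) yields $\tilde f-\hat f\in\CNF(\Real^n)$, so $\iota(f)=[\hat f]$. The map $f\mapsto[\hat f]$ is an algebra homomorphism because $\widehat{fg}=\hat f\,\hat g$ and $\widehat{\partial^\k f}=\partial^\k\hat f$ hold pointwise, and it is injective since $\hat f(\mol_\e,\x)=f(\x)$ is independent of $\e$, so $\hat f\in\CNF(\Real^n)$ forces $f\equiv 0$; thus $C^\infty(\Real^n)$ sits in $\CGF(\Real^n)$ as a differential subalgebra. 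For part (d) I would establish the stronger identity $\partial^\k\tilde T=\widetilde{\partial^\k T}$ already at the level of representatives in $\CEF(\Real^n)$, which gives the claim in $\CGF(\Real^n)$ at once. It suffices to treat one $\partial_i$ and iterate: differentiating under the distributional pairing (legitimate by the smooth dependence of $\tau_\x\mol$ on $\x$) gives $\partial_{x_i}\langle T,\tau_\x\mol\rangle=\langle T,\partial_{x_i}(\tau_\x\mol)\rangle$, and since $\partial_{x_i}(\tau_\x\mol)(\y)=-(\partial_i\mol)(\y-\x)=-\partial_{y_i}(\tau_\x\mol)(\y)$, this equals $-\langle T,\partial_{y_i}(\tau_\x\mol)\rangle=\langle\partial_i T,\tau_\x\mol\rangle=\widetilde{\partial_i T}(\mol,\x)$.

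I expect the main obstacle to be the injectivity argument in part (b): it is the one place where one must step outside the purely formal manipulation of representatives and use genuine analytic input, namely the interplay between the negligibility estimates (available once the mollifier has enough vanishing moments) and the convergence $\tilde T(\mol_\e,\cdot)\to T$ supplied by the model delta net. Everything else is either inherited from the pointwise algebra on $\CEF(\Real^n)$, already provided by the ideal proposition, or a direct computation of the kind performed in the excerpt.
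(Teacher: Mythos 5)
Your proposal is correct, but there is nothing in the paper to compare it against line by line: the paper does not prove this proposition, deferring instead to Colombeau \cite{col1} and Oberguggenberger \cite{MO}, and supplying only two supporting computations in the surrounding text (the moderateness of $\tilde T$, and the Taylor expansion showing that $\tilde f-\hat f$ satisfies the undifferentiated negligibility estimate). Your argument assembles exactly these ingredients into a self-contained proof and adds the two pieces the paper leaves entirely to the references: injectivity of $\iota$, obtained from the interplay between the negligibility estimate (valid once $\mol$ has enough vanishing moments) and the convergence $\tilde T(\mol_\e,\cdot)\to T$ in $\cD'(\Real^n)$; and the representative-level identity $\partial^\k\tilde T=\widetilde{\partial^\k T}$, which indeed follows from $\partial_{x_i}(\tau_\x\mol)=-\partial_{y_i}(\tau_\x\mol)$ together with the definition of the distributional derivative, and is stronger than the statement in the quotient. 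Both are the standard arguments of the cited literature and are sound. Two minor points deserve explicit mention if this were written out in full: your injectivity argument (and the whole negligibility framework) presupposes $\OCA{q}(\Real^n)\neq\emptyset$ for every $q$, i.e.\ the existence of compactly supported mollifiers with vanishing moments of arbitrarily high order, a standard but nontrivial fact; and in part (a) the quantifier bookkeeping for products --- taking the maximum of the finitely many $N$'s produced by the Leibniz rule and using the nesting $\OCA{q+1}\subseteq\OCA{q}$ so that a single $N$ serves both as moment order and growth exponent --- is routine but should be recorded in one line.
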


As we remarked earlier an important concept is that of association.
\begin{definition}[Association]
We say an element $[F]$ of $\CGF(\Real^n)$ is associated to $0$ (denoted $[F] \approx 0$) if for each $\Psi \in \cD(\Real^n)$ there exists some $p>0$ with
\begin{equation*}
\lim_{\eps \to 0}\int_{\x \in \Real^n}F(\mol_\eps, \x)\Psi(\x)d\x=0 \quad \forall \mol \in \OCA{p}(\Real^n). 
\end{equation*}
We say two elements $[F],[G]$ are associated and write $[F]\approx [G]$ if $[F-G] \approx 0$.
\end{definition}

\begin{definition}[Associated distribution]
We say $[F] \in \CGF(\Real^n)$ admits $T \in \cD'(\Real^n)$ as an associated distribution if for each $\Psi \in \cD(\Real^n)$ there exists some $p>0$ with
\begin{equation*}
\lim_{\eps \to 0}\int_{\x \in \Real^n}F(\mol_\eps, \x)\Psi(\x)d\x=\langle
T, \Psi \rangle \quad
\forall \mol \in \OCA{p}(\Real^n). 
\end{equation*}
\end{definition}

These definitions do not depend on the choice of representative; moreover, note that not all generalised functions are associated to a distribution.

At the level of association we regain the following compatibility results for multiplication of distributions.

\begin{proposition}\label{prop2.10}
\leavevmode
\begin{enumerate}[label=(\alph*)]
 \item If $f \in C^{\infty}(\Real^n)$ and $T \in \cD'(\Real^n)$ then 
\begin{equation*}
\iota(f)\iota(T) \approx \iota(fT). 
\end{equation*}
\item If $f, g \in C^{0}(\Real^n)$ then 
\begin{equation*}
\iota(f)\iota(g) \approx \iota(fg).  
\end{equation*}
\end{enumerate}
\end{proposition}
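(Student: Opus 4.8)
The plan is to prove both parts by pairing the relevant representatives against an arbitrary test function $\Psi \in \cD(\Real^n)$ and reducing each claim to an elementary convergence statement about mollified functions. Throughout I would exploit the freedom to pick convenient representatives: because $\CNF(\Real^n)$ is an ideal in $\CMF(\Real^n)$, neither the product in $\CGF(\Real^n)$ nor the relation $\approx$ depends on this choice. The single analytic fact I would isolate first is the weak convergence of the mollified distribution: for any $\Theta \in \cD(\Real^n)$, interchanging the integral with the duality bracket gives $\int \tilde T(\mol_\eps, \x)\Theta(\x)\,\dx = \langle T, \mol_\eps * \Theta\rangle$, and since $\int \mol = 1$ one has $\mol_\eps * \Theta \to \Theta$ in $\cD(\Real^n)$, so this tends to $\langle T, \Theta\rangle$. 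Note that this already holds for $\mol \in \OCA{0}(\Real^n)$, so the value $p=0$ suffices for the existence required in the definition of association, and no moment conditions are needed.

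For part (a), since $f$ is smooth I would represent $\iota(f)$ by $\hat f(\mol,\x)=f(\x)$ (legitimate by the compatibility of $\iota$ with smooth functions recorded above) and $\iota(T)$ by $\tilde T$, so that $\iota(f)\iota(T)$ is represented by $(\mol_\eps,\x)\mapsto f(\x)\tilde T(\mol_\eps,\x)$; this is moderate as a product of moderate functions. Applying the weak convergence above once with $\Theta=f\Psi$, which lies in $\cD(\Real^n)$ precisely because $f$ is smooth, yields $\int f(\x)\tilde T(\mol_\eps,\x)\Psi(\x)\,\dx \to \langle T, f\Psi\rangle = \langle fT,\Psi\rangle$. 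Applying the same convergence a second time to the distribution $fT$ gives $\int \widetilde{fT}(\mol_\eps,\x)\Psi(\x)\,\dx \to \langle fT,\Psi\rangle$. Subtracting, the difference of the two integrals tends to $0$, which is exactly $\iota(f)\iota(T)\approx\iota(fT)$.

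For part (b) the functions are only continuous, so no smooth $\hat f$ is available and I would use $\tilde f$ and $\tilde g$ themselves, with $\iota(f)\iota(g)$ represented by $(\mol_\eps,\x)\mapsto \tilde f(\mol_\eps,\x)\tilde g(\mol_\eps,\x)$ (again moderate). The crucial input is that mollification of a continuous function converges \emph{locally uniformly}, not merely distributionally: rewriting $\tilde f(\mol_\eps,\x)=\int f(\x+\eps\z)\mol(\z)\,\rd\z$ and using $\int\mol=1$ together with the uniform continuity of $f$ on a neighbourhood of a given compact set gives $\tilde f(\mol_\eps,\cdot)\to f$ uniformly on compacta, and likewise for $g$ and for the continuous function $fg$. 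Hence $\tilde f(\mol_\eps,\cdot)\,\tilde g(\mol_\eps,\cdot)\to fg$ uniformly on $\supp\Psi$, so both $\int \tilde f\tilde g\,\Psi\,\dx$ and $\int \widetilde{fg}\,\Psi\,\dx$ converge to $\int fg\,\Psi\,\dx$, and their difference tends to $0$, giving $\iota(f)\iota(g)\approx\iota(fg)$.

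The step I expect to be essential, and where continuity cannot be weakened, is the locally uniform convergence used in part (b): it is exactly what lets the product of the two mollified nets converge to the product of their limits, whereas weak convergence of each factor alone would not control the product. This is consistent with the fact that in $\CGF(\Real^n)$ the strict product $\iota(f)\iota(g)$ is generally \emph{not} equal to $\iota(fg)$ (Schwartz's impossibility result), so the identity can only be expected at the level of association. By contrast, part (a) requires no regularity beyond $T\in\cD'(\Real^n)$, since there the smoothness of $f$ serves only to keep $f\Psi$ inside the test space $\cD(\Real^n)$.
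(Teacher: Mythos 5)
Your proof is correct. Note that the paper itself gives no proof of this proposition---it is quoted from the standard literature (\cite{col1}, \cite{MO})---and your argument (the weak-convergence identity $\int \tilde T(\mol_\eps,\x)\Theta(\x)\,\dx=\langle T,\mol_\eps*\Theta\rangle\to\langle T,\Theta\rangle$ for part (a), and locally uniform convergence of mollified continuous functions for part (b)) is precisely the standard one found there, so there is nothing to compare beyond that. One pedantic repair: the definition of association requires \emph{some} $p>0$, so instead of asserting that $p=0$ suffices, observe that since your limits hold for every $\mol\in\OCA{0}(\Real^n)$ they hold in particular for every $\mol\in\OCA{1}(\Real^n)$, and take $p=1$.
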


Although the partial derivative commutes with the embedding this is not true of the Lie derivative. Let $X(\x) \in \TM(\Real^n)$ be a smooth vector field on $\Real^n$ and $T \in \mathcal{D}'(\Real^n)$, then 
\begin{align*}
(\Lie_X\tilde T)(\mol_\eps,\x)&=X^a(\x)\partial_a(\tilde T(\mol_\eps,\x)) \\
&=X^a(\x)\langle \partial_a T, \mol_{\x,\eps}\rangle.
\end{align*}
On the other hand,
\begin{equation*}
(\widetilde{\Lie_X T})(\mol_\eps,\x)= \langle X^a\partial_aT,
\mol_{\x,\eps} \rangle
\end{equation*}
These two expressions are not the same in general since the first only involves the value of the vector field at $\x$, while the second involves the values in a neighbourhood of $\x$. In fact, if these expressions always were the same this would mean the embedding commutes with multiplication by smooth functions, which contradicts the Schwartz impossibility result. However, by part (a) of Proposition \ref{prop2.10} the two expressions are associated since $X^a$ is a smooth function for $a=1 \dots n$. We also note that if $f$ is a smooth function then $\Lie_X\tilde f=\widetilde{\Lie_X f}$ since we may represent $\tilde f$ by $f$ and $\widetilde{\Lie_X f}$ by $\Lie_X f$. We therefore have the following proposition.
\begin{proposition} 
\leavevmode
\begin{enumerate}[label=(\alph*)]
 \item Let $f \in C^\infty(\Real^n)$ and $X$ be a smooth vector
field. Then,
\begin{equation}
\Lie_X (\iota(T)) = \iota(\Lie_X T).  
\end{equation}
\item Let $T \in \cD'(\Real^n)$ and $X$ be a smooth vector field. 
Then,
\begin{equation}
\Lie_X( \iota(T)) \approx \iota(\Lie_X T). 
\end{equation}
\end{enumerate}
\end{proposition}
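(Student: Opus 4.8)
The plan is to reduce both parts to statements about explicit representatives, exploiting that ordinary partial differentiation already commutes with $\iota$ and that association is compatible with multiplication by smooth functions. For part (a), where $f$ is smooth, I would represent $\iota(f)$ by the net $\hat f(\mol,\x)=f(\x)$, as permitted by the preceding proposition. The Lie derivative of a generalised function along the smooth field $X$ is computed on representatives by $(\Lie_X \hat f)(\mol_\eps,\x)=X^a(\x)\,\partial_a \hat f(\mol_\eps,\x)=X^a(\x)\,\partial_a f(\x)$. On the other hand $\Lie_X f=X^a\partial_a f$ is again smooth, so $\iota(\Lie_X f)$ may likewise be represented by the net $\x\mapsto X^a(\x)\,\partial_a f(\x)$. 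The two representatives coincide identically, hence the equivalence classes agree and part (a) follows with no estimate required.

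For part (b) the two nets no longer coincide, and the key is to rewrite $\Lie_X(\iota T)$ so that Proposition \ref{prop2.10}(a) applies termwise. First I would use that the embedding commutes with partial differentiation, i.e. $\partial_a(\iota T)=\iota(\partial_a T)$, to write
\begin{equation*}
\Lie_X(\iota T)=\sum_a \iota(X^a)\,\partial_a(\iota T)=\sum_a \iota(X^a)\,\iota(\partial_a T),
\end{equation*}
where I have used that each component $X^a$ is smooth, so multiplication by $X^a$ in the algebra is multiplication by $\iota(X^a)$; at the level of representatives this is exactly the net $X^a(\x)\,\partial_a\tilde T(\mol_\eps,\x)$ exhibited just before the proposition. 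Applying Proposition \ref{prop2.10}(a) to each summand gives $\iota(X^a)\iota(\partial_a T)\approx\iota(X^a\partial_a T)$.

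It then remains to sum these associations. I would check that $\approx$ respects finite sums: given a test function $\Psi$, each summand supplies an order $p_a$ beyond which the relevant limit vanishes, and taking $p=\max_a p_a$ (using $\OCA{p}\subset\OCA{p_a}$ since higher moment conditions are more restrictive) makes all the limits vanish simultaneously, so by linearity of the defining integral the sum is associated to zero. Combining this with linearity of $\iota$ yields
\begin{equation*}
\Lie_X(\iota T)\approx\sum_a \iota(X^a\partial_a T)=\iota\Bigl(\sum_a X^a\partial_a T\Bigr)=\iota(\Lie_X T),
\end{equation*}
which is part (b). I do not expect a genuine obstacle: the only points needing care are the identification of $\Lie_X(\iota T)$ with the correct net of representatives and the routine verification that association is additive. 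The substantive content has already been isolated in Proposition \ref{prop2.10}(a), and its failure to upgrade from association to equality is precisely the Schwartz obstruction noted above.
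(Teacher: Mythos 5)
Your proposal is correct and follows essentially the same route as the paper: for (a) the paper likewise represents $\iota(f)$ by $\hat f$ and observes that $\Lie_X\hat f$ and $\widehat{\Lie_X f}$ coincide identically, and for (b) it rewrites $(\Lie_X\tilde T)(\mol_\eps,\x)$ as $X^a(\x)\langle\partial_a T,\mol_{\x,\eps}\rangle$ and invokes Proposition \ref{prop2.10}(a) since each $X^a$ is smooth. Your explicit verification that association is additive over the finite sum (choosing $p=\max_a p_a$) is a routine detail the paper leaves implicit, but it is the same argument.
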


It is also possible to localise the entire construction to obtain $\CGF(\Omega)$ for open sets $\Omega \subset \Real^n$ by restricting $\x$ to lie in $\Omega$ in the relevant definitions. The only technical complication relates to the embedding where one must first extend the distribution and then show that the result is independent of the extension (see \cite{col1} for details).

\section{Smoothing distributions and the Colombeau algebra on manifolds}

A coordinate independent description of generalised functions on open
sets $\Omega \subset \Real^n$ was proposed by Colombeau and Meril
\cite{colmeril}. However, this suffered from a number of minor
defects; in particular, the definition of $\OCA{k}(\Omega)$ did not
take into account the $x$-dependence of the mollifiers which meant
that the definition of moderate functions was dependent on the
coordinate system used. An explicit counterexample due to Jel\'{\i}nek
\cite{jelinek} demonstrated that the construction was not in fact
diffeomorphism invariant.  In the same paper Jel\'{\i}nek gave an
improved version of the theory which clarified a number of important
issues but fell short of proving the existence of a coordinate
invariant algebra.  The existence of a (local) diffeomorphism
invariant Colombeau algebra on open subsets $\Omega$ of $\Real^n$ was
finally established Grosser et al.\ \cite{mem}.  In parallel with
this, \cite{VW2} proposed a definition of a global manifestly
diffeomorphism invariant theory on manifolds. By making use of the
characterisation results of \cite{mem} it was shown in \cite{advances}
that one can construct a global Colombeau algebra $\CG(M)$ of
generalised functions on manifolds. There, it was demonstrated how to
obtain a canonical linear embedding of $\cD'(M)$ into $\CG(M)$ that
renders $C^\infty(M)$ a faithful subalgebra of $\CG(M)$. In addition,
it was shown that this embedding commutes with the generalised Lie
derivative, ensuring that the theory retains all the distinguishing
features of the local theory in the global context. Although this
theory has a well defined generalised Lie derivative it turns out that
there is no natural definition of a generalised covariant
derivative. In this section we describe a new approach to Colombeau
algebras \cite{Nigsch} based on the concept of smoothing operators
that it is closer to the intuitive idea of a generalised function as a
family of smooth functions. This results in a new basic space which
allows us to define both a generalised Lie derivative and a covariant
derivative. Replacing the spaces $\mathcal{A}_k(\Real^n)$ by suitable spaces
of smoothing kernels we are able to use asymptotic versions of the moment conditions
and hence do not need such a grading anymore, which results in a quantifier less
in the definitions of moderateness, negligiblity and association.
In contrast to \cite{advances}, which made use of the
local theory in a number of key places, in the current paper we give
intrinsic definitions on the whole of the manifold $M$. As
here we only outline the general theory, we refer for full proofs to
\cite{distcurv}.

On $\Real^n$ the space of distributions $\cD'(\Real^n)$ is dual to the space of smooth functions of compact support, whereas on an orientable manifold the space of distributions $\cD'(M)$ is dual to $\Omega^n_c(M)$, the space of $n$-forms of compact support (note that on not necessarily orientable manifolds, one uses densities instead of $n$-forms; on an oriented manifold these are the same). In the Colombeau theory on $\Real^n$ smoothness of the embedded functions is obtained by integrating against mollifier functions $\mol(\y-\x)$. The obvious generalisation on manifolds is to replace the function by an $n$-form $\omega$. However, on a manifold it does not make sense to look at $\omega(y-x)$ since $y-x$ has no coordinate independent meaning, so instead we will look at objects $\sk_x(y)$ which are $n$-forms in $y$ parameterised by $x \in M$. We therefore make the following definition.

\begin{definition}
A \emph{smoothing kernel} $\sk$ is a smooth map
\begin{eqnarray*}
\sk \colon M &\to& \Omega^n_c(M)\\
x &\mapsto& \sk_x
\end{eqnarray*} 
and we denote the space of such objects $\SK(M)$. Thus $\SK(M)=C^\infty(M, \Omega^n_c(M))$.
\end{definition}
The key new idea (see \cite{Nigsch} for more details) is not to immediately try and generalise the Colombeau construction of $\Real^n$ to a manifold $M$ in some ad-hoc way, but to start with the notion of smoothing operator.

\begin{definition}
A {\it smoothing operator} $\SO$ on $M$ is a linear continuous map
\begin{equation*}
\SO \colon \cD'(M) \to C^\infty(M)
\end{equation*}
We denote the space of such objects by $L(\cD'(M),C^\infty(M))$.
\end{definition}

Given a smoothing operator $\SO$ we may associate to it a smoothing kernel $\sk$ in the following way: if for $u \in \cD'(M)$ and $x \in M$ we demand that
\[ \SO(u)(x) = \langle u, \sk_x \rangle \qquad \forall u \in \cD'(M),\ \forall x \in M \]
then this implies
\[ \sk_x(y) = \langle \delta_y, \sk_x \rangle = \SO(\delta_y)(x). \]
Thus given a smoothing operator $\SO \in L(\cD'(M),C^\infty(M))$, this converts a distribution $u \in \cD'(M)$ into a smooth function $\SO(u)$ by the action of $u$ on the smoothing kernel $\sk$ where $\sk_x(y)\coleq\SO(\delta_y)(x)$. Conversely, given a smoothing kernel $\sk \in \SK(M)$ we obtain a smoothing operator by $\SO(u)(x)\coleq <u, \sk_x>$. Indeed, it follows from a variant of the Schwartz kernel theorem that this correspondence is a
topological isomorphism
\begin{equation}
L_b(\cD'(M), C^\infty(M)) \cong C^\infty(M, \Omega^n_c(M))=\SK(M)
\label{kernel}
\end{equation}
where the left hand side has the topology of bounded convergence and the right hand side has the topology of uniform convergence on compact sets in all derivatives.

We therefore take our basic space $\MEF(M)$ of generalised functions to consist of (smooth) maps from the space of smoothing kernels to the space of smooth functions,
\[ \MEF(M)\coleq C^\infty(\SK(M), C^\infty(M)). \]
Note that in this definition (and elsewhere in the paper where we consider smooth maps between infinite dimensional spaces) we will use the definition of smoothness based on \emph{the convenient setting of global analysis} of \cite{KM}. The basic idea of this approach is that a map $f \colon E\to F$ between locally convex spaces is smooth if it transports smooth curves in $E$ to smooth curves in $F$ (where the notion of smooth curves is straightforward via limits of difference
quotients).

Actually the basic space $\MEF(M)$ is somewhat larger than we would want since it allows $F(\sk)$ to depend on $\sk$ globally, which destroys the sheaf character of the algebra. We therefore restrict to a sub-algebra $\MEF_{\mathrm{loc}}(M)$ consisting of \emph{local} elements $F \in \MEF(M)$, defined by the property that if two smoothing kernels $\sk$ and $\tilde \sk$ agree on some open set $U$ then $F(\sk)$ and $F(\tilde \sk)$ also agree on $U$. Note that all embedded elements satisfy this condition so that there is no real loss of generality in restricting to this space. Therefore, for the rest of the paper we will work exclusively with $\MEF_{\mathrm{loc}}(M)$ but for ease of notation we will simply write it as $\MEF(M)$. For an in-depth exposition of this topic we refer to \cite{specfull}.

The basic space naturally contains both $\cD'(M)$ and $C^\infty(M)$ via the linear embeddings $\iota$ and $\sigma$
\begin{equation*}
\begin{aligned}
\iota \colon \cD'(M) &\to \MEF(M) \qquad & (\iota u)(\sk)(x) & \coleq <u, \sk_x> \\
\sigma \colon C^\infty(M) &\to \MEF(M) \qquad & (\sigma f)(\sk)(x) & \coleq f(x)
\end{aligned}
\end{equation*}
and inherits the algebra structure from $C^\infty(M)$ through the product
\begin{equation*}
(F_1\cdot F_2)(\sk)\coleq F_1(\sk)F_2(\sk), \qquad F_1,F_2 \in \MEF(M),\ \sk \in \SK(M).
\end{equation*}

We may regard a smooth function as a regular distribution so that one may embed it either via $\sigma$ to obtain $(\sigma f)(\sk)(x)=f(x)$ or via $\iota$ to obtain $(\iota f)(\sk)(x)=\int f(y) \sk_x(y)$. In order to identify these expressions we would like to set $\sk_x=\delta_x$. Strictly speaking this is not possible, but replacing $\sk_x$ by a net $(\sk_{x,\e})_\e$ of $n$-forms which tends to $\delta_x$ appropriately as $\epsilon \to 0$ and using suitable asymptotic estimates to define negligibility allows us to construct a quotient algebra in which the two embeddings of smooth functions agree.

The next key concept required is therefore that of a delta net of smoothing kernels $\sk_\eps$ which will play the role of the $\eps$ dependent mollifiers $\mol_{x,\eps}$ used in the embedding of distributions on $\Real^n$. Since we are working on a manifold we do not have translation and scaling operators available, so we need to consider carefully what properties are required. Again, rather than simply trying to copy the construction on $\Real^n$ it is useful to look at what is required from the point of view of the corresponding family of smoothing operators. The key properties are that:
\begin{enumerate}[label=(\alph*)]
 \item\label{prop1} the family of smoothing operators should be localising,
 \item\label{prop2} in the limit the smoothing operator when applied to smooth functions should be the identity in $C^\infty(M)$,
 \item\label{prop3} the family of smoothing operators should satisfy some seminorm estimates which control the growth and
 \item\label{prop4} in the limit the smoothing of a distribution $u$ should converge in $\cD'(M)$ to $u$.
\end{enumerate}
Property \ref{prop1} ensures that the support of the corresponding net of smoothing kernels shrinks, \ref{prop2} ensures that (in the quotient algebra) the embeddings $\sigma$ and $\iota$ coincide, \ref{prop3} ensures that the embedding of distributions is moderate and property \ref{prop4} shows that an embedded distribution is associated to the original distribution. More precisely, given a family of smoothing operators $(\SO_\eps)_{\eps \in (0,1]}$ we require

\begin{enumerate}[label=(\alph*)]
\item on any compact $K \subset M$ $\forall r>0$ $\exists \eps_0>0$ $\forall x\in K$ $\forall \eps \le \eps_0$ $\forall u \in \cD'(M)$:
  \[ \left(u|_{B_r(x)}=0 \Rightarrow \SO_\eps(u)(x)=0\right); \]
\item for any continuous seminorm $p$ on $L_b(C^\infty(M), C^\infty(M))$ and all $m \in \Nat$ we have
\[ p( \SO_{\eps}|_{C^\infty(M)} - \id ) = O(\e^m); \]
\item for any continuous seminorm $p$ on $L_b(\cD'(M), C^\infty(M))$ there is  $N \in \Nat$ such that
\[ p(\SO_\eps) = O(\eps^{-N}); \]
\item $\SO_{\eps} \to \id$ in $L_b(\cD'(M), \cD'(M))$.
\end{enumerate}

Note that in the second condition we demand convergence like $O(\e^m)$ for all $m$ at once, contrary to Colombeau's original algebra presented above.

We now use the topological isomorphism \eqref{kernel} to translate these conditions into conditions on a net $(\sk_\e)_\e$ of smoothing kernels. The first translates into the requirement that the support of the net shrinks, or more precisely that
\begin{gather*}
\textrm{on any compact }K \subset M\ \forall r>0\ \exists \eps_0>0\\
\forall x\in K\ \forall \eps \le \eps_0: \supp \sk_{x,\eps}\subseteq B_r(x).
\end{gather*}
To do this we need to introduce a Riemannian metric $h$ on $M$ in order to measure the radius of the ball. However, it is not hard to see that the condition does not depend upon the particular choice of Riemannian metric.

To formulate the next condition we need the Lie derivative of a smoothing kernel $\sk$, which we will introduce in terms of the 1-parameter family of diffeomorphisms induced by a vector field. In principle we can consider two different diffeomorphisms $\mu$ and $\nu$ which act separately on the $x$ and $y$ variables of $\sk$, i.e., the pullback action on the parameter $x$ (for fixed $y$) given by $(\mu^*\sk)_x\coleq \sk_{\mu(x)}$ on the one hand and the pullback action on the form (for fixed $x$) given by $\nu^*(\sk_{x})$ on the other hand. We will denote the combined pullback action on the smoothing kernel by $(\mu^*, \nu^*)\sk \coleq \nu^*(\sk_{\mu(x)})$.

We can therefore also consider two different (complete) vector fields $X$ and $Y$ with corresponding flows $\Fl^X_t$ and $\Fl^Y_t$ acting on the $x$ and $y$ variables. This enables us to define the (double) Lie derivative 
\begin{equation*}
\dLie_{(X,Y)}\sk=\left.\frac{d}{dt}\right|_{t=0}
\left(({\mathrm{Fl}}^{X})^*_t,({\mathrm{Fl}}^{Y})^*_t\right)\sk.  
\end{equation*}
Varying the $x$ and $y$ variables separately we have two Lie derivatives
\begin{equation*}
(\dLie_{(X,0)}\sk)_x= \left.\frac{d}{dt}\right|_{t=0}\sk_{\mathrm{Fl}^X_t(x)}
\end{equation*}
and
\begin{equation*}
(\dLie_{(0,Y)}\sk)_x=\left.\frac{d}{dt}\right|_{t=0}
({\mathrm{Fl}}^{Y})^*_t(\sk_x) 
\end{equation*}
and hence
\begin{equation*}
\dLie_{(X,Y)}\sk=\dLie_{(X,0)}\sk+\dLie_{(0,Y)}\sk.
\end{equation*}
Since $\dLie_{(X,0)}\sk$ is given by the formula for the Lie derivative of a function we will denote this derivative by $\Lief_X\sk$, and since $\dLie_{(0,Y)}\sk$ is given by the Lie derivative of an $n$-form we will denote this derivative by $\Lien_Y\sk$. Finally, we will often want to take the geometrically natural Lie derivative $\dLie_{(X,X)}\sk$ of a smoothing kernel which we denote $\Liesk_X\sk$. Note that $\dLie_{(X,0)}\sk$ is denoted $L'_X\sk$ and $\dLie_{(0,Y)}\sk$ is denoted $L_Y\sk$ in \cite{advances}.

We may now define the convergence corresponding to the second
condition above by demanding that for all compact subsets $K \subset M$,
all $m \in \Nat_0$ and all smooth vector fields $X_1, \dotsc, X_m$ on
$M$ we have
\begin{equation*}
\sup_{x \in K}\abso{\left(\int_M
f\Lief_{X_1}\dots\Lief_{X_m}\sk_{x,\eps}\right)-\Lie_{X_1}\dots\Lie_{X_m}f(x)}
=O(\eps^m)
\label{moment}
\end{equation*}
as $\e \to 0$.

It turns out that the third condition, which allows us to establish the fact that the embedding of a distribution is moderate, takes the following form. For any distribution $u \in \cD'(M)$, on any compact subset $K \subset M$ we require
$\forall k \in \Nat_0$ $\forall X_1,\dots,X_k \in\mathfrak{X}(M)$ $\exists N \in \Nat$ such that 
\begin{equation*}
\sup_{x\in K} \abso{\Lie_{X_1}\dots \Lie_{X_k} \langle u, \sk_{x,\eps} \rangle } 
= O(\eps^{-N}).
\end{equation*}

Finally, the fourth condition which gives convergence in $\cD'(M)$ is given by the condition that $\forall u \in \cD'(M)$, $\forall \omega \in \Omega^n_c(M)$
\begin{equation*}
\lim_{\eps \to 0}\int_{x \in M}\langle u, \sk_{\eps,x} \rangle \omega(x)=\langle
u, \omega \rangle.
\end{equation*}

We are now in a position to define a delta net of smoothing kernels (cf.~\cite{distcurv} where the corresponding nets are called \emph{test objects}).

\begin{definition}[Delta Nets of Smoothing kernels] \label{kernels}
$(\sk_\e)_\e \in \SK(M)^{(0,1]}$ is called a delta net of smoothing kernels if on any compact subset $K$ of $M$ it satisfies the following conditions:
\begin{enumerate}[label=(\arabic*)]
\item $\forall r>0$  $\exists \eps_0$ $\forall x\in K$ $\forall \eps \le \eps_0$: $\supp \sk_{x,\eps}\subseteq B_r(x)$;
\item $\forall m \in \Nat$, as $\e \to 0$: \[ \sup_{x \in K}\abso{\left(\int_M f\Lief_{X_1}\dots\Lief_{X_k}\sk_{x,\eps}\right)-\Lie_{X_1}\dots\Lie_{X_k}f(x)} =O(\eps^m);\]
\item $\forall u \in \cD'(M)$ $\forall k \in \Nat_0$ $\exists N \in \Nat$ $\forall X_1,\dots,X_k \in\mathfrak{X}(M)$:
\[ \sup_{x\in K} \abso{\Lie_{X_1}\dots \Lie_{X_k} \langle u, \sk_{x,\eps} \rangle } = O(\eps^{-N}); \]
\item $\forall u \in \cD'(M)$ $\forall \omega \in \Omega^n_c(M)$:
\[ \lim_{\eps \to 0}\int_{x \in \Real^n}\langle u, \sk_{\eps,x} \rangle \omega(x)=\langle u, \omega \rangle. \]
\end{enumerate}

The space of delta nets smoothing kernels on $M$ is denoted $\MC(M)$.
\end{definition}

\begin{remark}
We have seen in the previous section that the moment conditions on $\Real^n$ allow one to show that for a smooth function $f$ and for $\mol \in \OCA{q}(\Real^n)$ we have (in the case $n=1$)
\begin{equation}
\tilde f(\mol_\eps,x)= f(x) + 
{\eps^{q+1}\over {q!}} \int_{\Real} \int_0^1 y^{q+1} (1-t)^q
   f^{(q+1)}(x+t \eps y) \mol(y) \,\rd t \,\rd y 
\end{equation}
so that
\begin{equation}
\tilde f(\mol_\eps, x)= f(x) +O(\eps^{q+1}) \label{moment}
\end{equation}
with a similar argument giving the same estimate for the derivatives. By Proposition \ref{noderiv} this shows that $\tilde f -\hat f$ is negligible and hence that the two possible embeddings of a smooth function coincide in the algebra. On a manifold we have turned things round and instead used \eqref{moment} to characterise the moment condition. As is the case in $\Real^n$ we will use this condition to show that the two possible embeddings of smooth functions differ by a negligible function and hence coincide in the factor algebra.
\end{remark}

Although not necessary for the bare constrution of the theory, it is beneficial for practical calculations to add $L^1$-conditions on the    nets of smoothing kernels. For example, if we also require that 
\begin{equation} \label{l1cond}
\int_M \abso{\sk_{x,\eps}} \to 1 \quad \textrm{ uniformly for }x\textrm{ in compact subsets of }M
\end{equation}
so that (asymptotically) the $L^1$-norm of the smoothing kernels is unity, one can then show that
\begin{equation*}
\lim_{\eps \to 0}\int_{M}f \sk_{x,\eps}
= f(x) \quad\forall f \in C^0(M). 
\end{equation*}
Hence $\iota(f)(\sk_\eps)=<f, \sk_\eps>$ converges to $f$
pointwise. However, this condition is different from condition (2) which involves convergence in $C^\infty(M)$ and requires that the derivatives (of arbitrary order) also converge to the derivatives of $f$.
Another useful condition imitating the behaviour of scaled and translated mollifiers is
\begin{equation}\label{l1cond2}
\forall K \subseteq M \textrm{ compact}\ \forall \alpha \in \Nat_0^n: \sup_{x \in K} \int \abso{\partial_x^\alpha \sk_\e(x)} = O(\e^{-\abso{\alpha}}).
\end{equation}

Before turning to the definition of moderate and negligible functions
we consider the definition of the Lie derivative for elements of the
basic space. There are two different ways of thinking about the Lie
derivative of an element $F \in \MEF(M)$. The first comes from looking
at the pullback action of the diffeomorphism group on the basic space
(which we call the geometrical or generalised Lie derivative) while
the second comes from thinking of $F(\sk)$ for fixed $\sk$ as a smooth
function.  The former has the advantage that it commutes with the
embedding of distributions, but on the other hand it cannot be
$C^\infty$ linear in $X$ (since having {\it both} properties would
violate the Schwartz impossibility result). The latter is simply the
ordinary Lie derivative of a smooth function and therefore agrees with
the directional derivative or covariant derivative of a function. This
will allow us to define the covariant derivative of a generalised
tensor field as in \cite{diffgeom}. Although the ordinary Lie
derivative does not commute with the embedding of distributions, as is
the case on $\Real^n$, it does so at the level of association.

To consider the geometric Lie derivative we start by looking at the action of a diffeomorphism on a generalised function.
\begin{definition}[Pullback action]
If $\psi:M \to N$ is a diffeomorphism then we define the pullback $\psi^*: \MEF(N) \to \MEF(M)$ by
\[ (\psi^* F)(\sk)(x)\coleq  F\bigl(((\psi^{-1})^*,(\psi^{-1})^*)\sk\bigr)(\psi(x)). \]
\end{definition}

We are now in a position to define the Lie derivative.
\begin{definition}[Geometrical Lie derivative] 
Let ${\mathrm{Fl}}^X_t$ be the flow generated by the (complete) smooth vector field $X$. Then for $F \in \MEF(M)$ we set
\begin{equation*}
\gLie_XF=\left.\frac{d}{dt}\right|_{t=0} (\Fl^X_t)^* F.
\end{equation*}
\end{definition}
Using the chain rule we may write this as
\begin{equation*}
 (\gLie_XF)(\sk)=-dF(\sk)(\Liesk_X\sk)+\Lie_X(F(\sk))
\end{equation*}
and since this formula may also be applied to a non-complete vector field we take this as the definition in the general case.
\begin{definition}[Generalised Lie Derivative)] 
For any $F\in \MEF(M)$ and any $X\in \mathfrak{X}(M)$ we set
\begin{equation} \label{liealg}
(\gLie_X F)(\sk) \coleq -dF(\sk)(\Liesk_X\sk)+\Lie_X(F(\sk))
\end{equation}
\end{definition}
\begin{remark}\label{thesame}
In the terminology of \cite{specfull}, the basic space of \cite{advances} is given by the $(\omega_x,x)$-local elements of $\MEF(M)$. On these, the formula for the generalised Lie derivative is identical to that in \cite{advances} evaluated at $\omega=\sk_x$.
\end{remark}

The other approach is to fix the smoothing kernel $\sk \in \SK(M)$ so that $ x \mapsto F(\sk)(x)$ is a smooth function of $x$. We may then define another Lie derivative of $F$ (which we denote $\Liet_X F$) by fixing $\sk$ and taking the (ordinary) Lie derivative of $F(\sk)$, so that
\begin{equation}
(\Liet_X F)(\sk)\coleq \Lie_X(F(\sk)).\label{lie1}
\end{equation}

Having defined suitable derivatives on $\MEF(M)$ and established that $\MC(M)$ is non-void, we turn to the definition of moderate and negligible functions on manifolds. We start with the definition of negligible functions. Consider a net $\SO_\e$ of smoothing operators converging to the identity. Then from this point of view the natural definition of a negligible function $F$ is one that satisfies $F(\SO_\e) \to 0$ as $k \to \infty$ in $C^\infty(M)$ (i.e. in all derivatives).  Writing this in terms of smoothing kernels we therefore require $\Lie_{X_1}\ldots\Lie_{X_k} (F(\sk_{\eps})) \to 0$ as $\eps \to 0$. Since $(\Liet_X F)(\sk)=\Lie_X(F(\sk))$ this automatically gives stability of the subspace of negligible functions under the ordinary Lie derivative $\Liet_X$. However we also require stability of negligible functions under the generalised Lie derivative $\gLie_X$. This suggests that we require 
\begin{equation*}
(\Liet_{\tilde X_1}\ldots\Liet_{\tilde X_k}  \gLie_{X_1}\ldots\gLie_{X_\ell}  F)(\sk_{\eps}) \to 0 \qquad \hbox{as $\eps \to 0$}. 
\end{equation*}
However, by definition we have
\begin{equation*}
((\Liet_X-\gLie_X)F)(\sk_\eps)=dF(\sk_\eps)(\Liesk_X \sk_\eps) 
\end{equation*}
so that taking linear combinations of the two types of Lie derivative is equivalent to looking at $dF$ and evaluating it on the tangent space to $\MC(M)$. We therefore introduce the space 
\begin{equation*}
\MC_0(M)\coleq\{\skk_0 \in C^\infty(M, \Omega^n_c(M))^{(0,1]}: \sk \in \MC(M) \Rightarrow \skk_0+\sk \in \MC(M)\}
\end{equation*}
and make the following definition:
\begin{definition}[Negligible functions] \label{nullman}
The function $F \in \MEF(M)$ is negligible if for any given compact $K\subset M$ $\forall k, j, m \in\Nat_0$ $\forall X_1\ldots X_k \in \TM(M)$ $\forall \sk \in \MC(M)$ $\forall \skk_1 \ldots \skk_j \in \MC_0(M)$:
\begin{equation*} 
\sup_{x\in K} \abso{ (\Liet_{X_1}\ldots\Liet_{X_k} (d^jF)(\sk_\e)(\skk_{1,\e} \ldots\skk_{j,\e})(x) } = O(\eps^{m}) \qquad \hbox{as } \eps \to 0. 
\end{equation*}
The set of negligible elements is denoted $\CN(M)$.
\end{definition}
In order that the space of negligible functions is an ideal we also need to restrict to the space of moderate functions.

\begin{definition}[Moderate functions] \label{moderateman}
The function $F \in \MEF(M)$ is moderate if for any given compact $K\subset M$ $\forall k, j \in\Nat_0$ $\forall \sk \in \MC(M)$ $\forall \skk_1 \ldots \skk_j \in \MC_0(M)$ $\exists N \in \Nat_0$ $\forall X_1\ldots X_k \in \TM(M)$:
\begin{equation} 
\sup_{x\in K} \abso{ (\Liet_{X_1}\ldots\Liet_{X_k} (d^jF)(\sk_\e)(\skk_{1,\e} \ldots\skk_{j,\e})(x) }
  = O(\eps^{-N}) \qquad \hbox{as } \eps \to 0. \label{32b}
\end{equation}
The set of moderate elements of $\MEF(M)$ is denoted $\CM(M)$.
\end{definition}

\begin{remark}
Although the above definitions require one to consider derivatives $d^jF$ of arbitrary order in practice one only needs to verify this condition is satisfied by objects that are embedded into the algebra via $\sigma$ or $\iota$. Since $\sigma$ does not depend on $\sk$ and the embedding $\iota$ is linear in $\sk$, this leaves the cases $j=0$ and $j=1$.
\end{remark}

\begin{theorem}\label{ideal}
\leavevmode
\begin{enumerate}[label=(\alph*)]
\item $\CM(M)$ is  a subalgebra of $\MEF(M)$.
\item $\CN(M)$ is an ideal in $\CM(M)$.
\end{enumerate}
\end{theorem}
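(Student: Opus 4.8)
The plan is to reduce everything to two product rules and then carry out the book-keeping of the asymptotic estimates. First observe that both $\Liet_X$, which by definition acts as $\Lie_X$ on the $x$-variable of a smooth function, and the convenient-calculus derivative $d^j$ in the $\sk$-variable are linear operations, and that each defining estimate in Definitions \ref{moderateman} and \ref{nullman} is an estimate on the single expression $\Liet_{X_1}\cdots\Liet_{X_k}(d^jF)(\sk_\e)(\skk_{1,\e},\ldots,\skk_{j,\e})$. Closure of $\CM$ and $\CN$ under linear combinations is therefore immediate, taking $N$ to be the maximum of the two exponents in the moderate case. Likewise $\CN\subseteq\CM$: for $F\in\CN$ the negligibility estimate at order $m=0$ yields the moderate estimate with the (vector-field-uniform) exponent $N=0$. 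It remains to treat products, and here the key point is that the product on $\MEF(M)$ is inherited from the bounded bilinear, hence smooth, pointwise multiplication $C^\infty(M)\times C^\infty(M)\to C^\infty(M)$.

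For a product $F\cdot G$ I would expand the combined operator by a \emph{double} Leibniz rule. For each fixed $x$ the map $\sk\mapsto (F\cdot G)(\sk)(x)=F(\sk)(x)\,G(\sk)(x)$ is an ordinary product of two real-valued smooth functions of $\sk$, so the convenient-calculus Leibniz formula gives
\begin{equation*}
(d^j(F\cdot G))(\sk)(\skk_1,\ldots,\skk_j)=\sum_{S\subseteq\{1,\ldots,j\}}(d^{|S|}F)(\sk)(\skk_S)\cdot(d^{|S^c|}G)(\sk)(\skk_{S^c}),
\end{equation*}
where $\skk_S$ is the subtuple indexed by $S$ (symmetry of $d^j$ makes the ordering irrelevant). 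Applying $\Liet_{X_1}\cdots\Liet_{X_k}=\Lie_{X_1}\cdots\Lie_{X_k}$, a composition of derivations on $C^\infty(M)$, distributes each summand across the two factors, producing a finite sum of terms of the form
\begin{equation*}
\bigl(\Lie_{X_T}(d^{|S|}F)(\sk_\e)(\skk_{S,\e})\bigr)\cdot\bigl(\Lie_{X_{T^c}}(d^{|S^c|}G)(\sk_\e)(\skk_{S^c,\e})\bigr),\qquad T\subseteq\{1,\ldots,k\},
\end{equation*}
where $\Lie_{X_T}$ is the composition of the $\Lie_{X_i}$ with $i\in T$. Each factor is exactly an expression of the type estimated in the definitions, for $F$ and for $G$ respectively, involving $\le k$ Lie derivatives and $\le j$ convenient derivatives.

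With this expansion the estimates follow. For part (a), moderateness of $F$ and of $G$ supplies, for each of the finitely many pairs $(T,S)$, exponents $N^F_{T,S}$ and $N^G_{T,S}$ that are \emph{uniform in the vector fields}; setting $N=\max_{T,S}(N^F_{T,S}+N^G_{T,S})$ bounds every term, and hence the whole sum, by $O(\e^{-N})$ uniformly in $X_1,\ldots,X_k$, so $F\cdot G\in\CM$. Since $\sigma(1)$ is moderate this makes $\CM$ a subalgebra. For part (b), let $F\in\CN$ and $G\in\CM$, and fix a target order $m$ together with the data $K,k,j,\sk$, the $\skk_i$ and the $X_i$; moderateness of $G$ gives a single finite $N_0=\max_{T,S}N^G_{T,S}$, and negligibility of $F$ applied at order $m+N_0$ bounds each $F$-factor by $O(\e^{m+N_0})$, so each term is $O(\e^{m+N_0})\cdot O(\e^{-N_0})=O(\e^m)$ and the finite sum is $O(\e^m)$. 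As $m$ was arbitrary, $F\cdot G\in\CN$, which with linearity shows $\CN$ is an ideal in $\CM$.

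The main obstacle is not any single estimate but the care needed to keep the convenient calculus honest: one must check that $F\cdot G$ is again a \emph{smooth local} element of $\MEF(M)$ (locality is clear from the pointwise product, and smoothness follows by composing the smooth map $\sk\mapsto(F(\sk),G(\sk))$ with bounded bilinear multiplication, cf.\ \cite{KM}), and that the Leibniz formula for $d^j$ is valid in this setting. The other delicate point is the interaction of the quantifiers: moderateness asserts a \emph{single} exponent valid for all vector fields, whereas negligibility asserts decay of \emph{every} order, and it is precisely these uniformities that survive the distribution of a fixed finite set of vector fields and tangent vectors $\skk_i$ across the two factors of a product. Once these are secured, the argument is purely a finite book-keeping of powers of $\e$.
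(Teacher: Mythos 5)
Your proof is correct and follows essentially the same route as the paper: the paper's proof is a one-line appeal to ``the property of derivatives'' (i.e., the Leibniz rule for $d^j$ and for the Lie derivatives acting on products), which is exactly the double Leibniz expansion and power-of-$\e$ bookkeeping you carry out in detail. The quantifier care you add (uniformity of $N$ in the vector fields, invoking negligibility at order $m+N_0$) is precisely what makes that one-liner rigorous, so there is nothing to change.
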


\begin{proof}
Because of the property of derivatives it is clear from the definitions that that the product of two moderate functions is moderate and the product of a negligible function with a moderate function is negligible.
\end{proof}

\begin{proposition} \label{nulltest}
Let $F \in \CM(M)$ be such that for all compact $K\subset M$ $\forall m \in\Nat_0$ $\forall \sk \in \MC(M)$ 
\begin{equation*} 
\sup_{x\in K} \abso{ F(\sk_{\eps})(x) }  = O(\eps^{m}) \qquad \hbox{as } \eps \to 0. 
\end{equation*}
Then $F \in \CN(M)$.
\end{proposition}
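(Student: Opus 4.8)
The plan is to deduce the full negligibility estimate of Definition~\ref{nullman} --- which quantifies over the spatial Lie derivatives $\Liet_{X_1}\cdots\Liet_{X_k}$, the kernel derivatives $d^jF$, and the order $m$ --- from the single hypothesis that controls $F(\sk_\eps)$ itself (the case $k=j=0$) to all orders in $\eps$, together with the moderateness of $F$. This is the manifold counterpart of Proposition~\ref{noderiv}; the new ingredient compared with $\Real^n$ is that one must \emph{also} remove the derivatives $d^j$ in the smoothing-kernel directions, since these do not occur in the classical basic space.

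First I would dispose of the spatial derivatives. Fixing $j$, vector fields $X_1,\dots,X_k$ and a chart, the object $g_\eps\coleq (d^jF)(\sk_\eps)(\skk_{1,\eps},\dots,\skk_{j,\eps})$ is an ordinary smooth function of $x$, the operators $\Liet_{X_i}$ reduce to partial derivatives, and moderateness (Definition~\ref{moderateman}, applied with two extra factors) bounds its second derivatives by $O(\eps^{-N})$. A Landau--Kolmogorov / Taylor interpolation of exactly the kind underlying Proposition~\ref{noderiv} --- writing $\partial_i g_\eps(x)=\tfrac1h\bigl(g_\eps(x+he_i)-g_\eps(x)\bigr)-\tfrac h2\,\partial_i^2 g_\eps(\xi)$ and optimising $h=\eps^{(m+N)/2}$ --- upgrades a function-level bound $\sup_{x\in K}\abso{g_\eps}=O(\eps^m)$, valid for every $m$, into the same bound for every spatial derivative. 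Iterating over the order reduces the problem to the case $k=0$: it suffices to show $\sup_{x\in K}\abso{(d^jF)(\sk_\eps)(\skk_{1,\eps},\dots,\skk_{j,\eps})}=O(\eps^m)$ for all $j$ and all $m$.

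Next I would remove the kernel derivatives by the same interpolation, carried out in the direction parameters. Since the conditions defining $\MC_0(M)$ are invariant under scaling and addition, $\sum_{l=1}^{j}s_l\skk_l\in\MC_0(M)$ and hence $\sk+\sum_{l=1}^{j}s_l\skk_l\in\MC(M)$ for $s=(s_1,\dots,s_j)\in\Real^j$ near $0$; thus $\Phi_\eps(s,x)\coleq F\bigl(\sk_\eps+\sum_{l=1}^{j}s_l\skk_{l,\eps}\bigr)(x)$ is well defined and $(d^jF)(\sk_\eps)(\skk_{1,\eps},\dots,\skk_{j,\eps})(x)=\partial_{s_1}\cdots\partial_{s_j}\Phi_\eps(s,x)\big|_{s=0}$. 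For each fixed $s$ the hypothesis gives $\sup_{x\in K}\abso{\Phi_\eps(s,\cdot)}=O(\eps^m)$, while moderateness bounds the $s$-derivatives of $\Phi_\eps$ by $O(\eps^{-N})$; the interpolation argument in the $s$-variables, uniform in $x\in K$ --- equivalently, an induction on $j$ using a Taylor expansion in the last direction $\skk_j$ against the moderate bound on $d^{j+1}F$ --- then yields the desired $O(\eps^m)$ estimate at $s=0$, completing the $k=0$ case.

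The main obstacle is the local uniformity in the direction parameter $s$. The hypothesis supplies the function-level bound $O(\eps^m)$ for each \emph{fixed} delta net $\sk+\sum_l s_l\skk_l$, but the interpolation forces $s=s(\eps)\to 0$ and evaluates $d^2\Phi$ along the moving family $\{\sk+t\skk_l:t\in[0,s]\}$, so one genuinely needs the estimates to hold uniformly over a shrinking neighbourhood of $s=0$. I would close this gap using the boundedness properties of the convenient calculus of \cite{KM}: the smooth map $F$ and its derivatives carry bounded sets to bounded sets, so along the bounded segment $t\mapsto\sk+t\skk_l$ both the hypothesis and the moderate bound can be taken uniform, at which point the Taylor remainder is genuinely $O(\eps^{(m-N)/2})$ and, $m$ being arbitrary, negligible. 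The remaining steps are the routine chart and compactness bookkeeping; full details are in \cite{distcurv}.
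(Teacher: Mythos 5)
Your overall strategy is the paper's own: its one-line proof (``look at $F(\sk_\eps+\eps^k\skk_\eps)$, apply the mean-value theorem, use moderateness with $k$ suitably chosen'') is precisely your difference quotient in the kernel direction with $\eps$-optimised step, and your chart-wise interpolation removing the spatial Lie derivatives is the manifold version of Proposition \ref{noderiv}, which the paper leaves implicit but which Definition \ref{nullman} indeed requires. So the two-step decomposition --- first reduce to $k=0$, then remove the $d^j$ by Taylor expansion balanced against moderateness --- is the intended argument.

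The genuine problem is your repair of the uniformity issue, which you rightly isolate as the main obstacle: convenient-calculus boundedness cannot close it. The family $\{\sk_\eps+t\eps^k\skk_{l,\eps}:\ t\in[0,1],\ \eps\in(0,1]\}$ is \emph{unbounded} in $\SK(M)$: a bounded set of compactly supported smooth $n$-forms is supported in a fixed compact set with all derivatives uniformly bounded, so its only distributional limit points are continuous $n$-forms, whereas $\sk_{x,\eps}\to\delta_x$; hence delta nets necessarily blow up as $\eps\to 0$. Mapping bounded sets to bounded sets therefore yields at best a finite bound $C_\eps$ on each fixed-$\eps$ segment, with no control of $C_\eps$ as $\eps\to0$, whereas you need a rate $O(\eps^{-N})$ uniform along the segment. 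What actually closes the gap --- and what makes the paper's formulation work --- is a structural observation: the conditions characterising $\MC_0(M)$ (shrinking supports, the $O(\eps^m)$ moment-type estimates, the moderate bounds, weak convergence to $0$) are linear in the kernel and purely asymptotic in $\eps$, hence stable under multiplication by any scalar function $\lambda_\eps$ with $\abso{\lambda_\eps}\le 1$. Consequently (a) the moving evaluation point of your difference quotient is the \emph{single} net $(\sk_\eps+\eps^k\skk_{l,\eps})_\eps\in\MC(M)$, to which the hypothesis of the proposition applies verbatim, so no uniformity over a neighbourhood of $s=0$ is needed for that term; and (b) for the Taylor remainder you may choose $t_\eps\in[0,1]$ nearly realising the supremum over the segment and over $x\in K$, and then apply Definition \ref{moderateman} to the single net $(\sk_\eps+t_\eps\eps^k\skk_{l,\eps})_\eps\in\MC(M)$ with directions in $\MC_0(M)$, which gives the required uniform $O(\eps^{-N})$. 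With this substituted for your final paragraph, your induction on $j$ and the spatial interpolation close exactly as in the paper; as written, the remainder estimate is unjustified.
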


\begin{proof} 
This follows from looking at $F(\sk_\eps+\eps^k\skk_\eps)$ where $\sk \in \MC(M)$, $\skk \in \MC_0(M)$, applying the mean-value theorem and using the definition of moderateness of $F$ with $k$ suitably chosen.
\end{proof}

This result shows that one does not need derivatives to test negligibility of a moderate function.

\begin{theorem} 
Let $X\in \mathfrak{X}(M)$. Then
\begin{enumerate}[label=(\alph*)]
\item $\gLie_X \CM(M) \subseteq \CM(M)$ and $\Liet_X \CM(M) \subseteq \CM(M)$.
\item $\gLie_X \CN(M) \subseteq \CN(M)$ and $\Liet_X \CN(M) \subseteq \CN(M)$.
\end{enumerate}
\end{theorem}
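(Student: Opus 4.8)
The plan is to reduce all four inclusions to the defining estimates of Definitions \ref{moderateman} and \ref{nullman}. I would first dispose of the ordinary Lie derivative, which is the easy case. Since $(\Liet_X F)(\sk)=\Lie_X(F(\sk))$, we have $\Liet_X F=\Lie_X\circ F$, the composition of $F$ with the continuous linear operator $\Lie_X$ on $C^\infty(M)$. By the chain rule in the convenient setting the differentials commute with $\Lie_X$, so that $d^j(\Liet_X F)(\sk)(\skk_1,\dots,\skk_j)=\Lie_X\bigl(d^jF(\sk)(\skk_1,\dots,\skk_j)\bigr)$. Hence the quantity to be estimated for $\Liet_X F$ with vector fields $X_1,\dots,X_k$ is exactly the one appearing in the definitions for $F$ with the enlarged list $X_1,\dots,X_k,X$. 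Because moderateness of $F$ produces a single $N$ valid for all $(k+1)$-tuples, the same $N$ serves here for all $X_1,\dots,X_k$, and negligibility of $F$ holds for every $m$; this gives $\Liet_X\CM(M)\subseteq\CM(M)$ and $\Liet_X\CN(M)\subseteq\CN(M)$.

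For $\gLie_X$ I would use the splitting $\gLie_X F=\Liet_X F-G$ with $G(\sk)\coleq dF(\sk)(\Liesk_X\sk)$, cf.\ \eqref{liealg}; the first term is already handled, so it suffices to treat $G$. Writing $V\coleq\Liesk_X$, a continuous linear operator on $\SK(M)$, and using the Leibniz rule for the bilinear evaluation map together with $dV=V$ and the symmetry of higher derivatives, one finds
\[ d^jG(\sk)(\skk_1,\dots,\skk_j)=d^{j+1}F(\sk)(\Liesk_X\sk,\skk_1,\dots,\skk_j)+\sum_{i=1}^{j}d^jF(\sk)(\skk_1,\dots,\Liesk_X\skk_i,\dots,\skk_j). \]
Thus $d^jG$ evaluated along a net is a finite sum of terms $d^{j'}F(\sk_\e)(\dots)$ whose base net is $\sk\in\MC(M)$ and whose arguments are the original $\skk_i\in\MC_0(M)$, their images $\Liesk_X\skk_i$, and the extra argument $\Liesk_X\sk$.

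Everything therefore hinges on the lemma that $\Liesk_X$ maps $\MC(M)$ into $\MC_0(M)$ and maps $\MC_0(M)$ into itself. Granting it, all arguments above are admissible directions, and applying $\Lie_{X_1}\cdots\Lie_{X_k}$ (i.e.\ $\Liet_{X_1}\cdots\Liet_{X_k}$ on the output) together with $\sup_K$ reduces each summand to the defining estimate for $F$ with $j+1$ resp.\ $j$ differentials; moderateness of $F$ bounds each by $O(\e^{-N})$ (take the maximum of the finitely many exponents for a single $N$) and negligibility bounds each by $O(\e^m)$ for all $m$. Hence $G$, and so $\gLie_X F=\Liet_X F-G$, is moderate resp.\ negligible, using that $\CM(M)$ is a subalgebra and $\CN(M)$ an ideal (Theorem \ref{ideal}). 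To prove the lemma I would write $\Liesk_X=\Lief_X+\Lien_X$; conditions (1) and (3) of Definition \ref{kernels} follow from locality of the Lie derivative and from moving $\Lien_X$ onto the distribution via $\langle u,\Lien_X\sk_{x,\e}\rangle=-\langle\Lie_X u,\sk_{x,\e}\rangle$ and $\Lief_X$ onto the parameter via $\langle u,\Lief_X\sk_{x,\e}\rangle=\Lie_X\langle u,\sk_{\cdot,\e}\rangle(x)$. The decisive condition is (2): the $\Lief_X$-part contributes $\Lie_{X_1}\cdots\Lie_{X_k}\Lie_X f(x)+O(\e^m)$, while the $\Lien_X$-part, after an integration by parts in the form variable (Stokes, $\int_M(\Lie_X f)\,\omega+\int_M f\,\Lien_X\omega=0$), contributes $-\Lie_{X_1}\cdots\Lie_{X_k}\Lie_X f(x)+O(\e^m)$; the two leading terms cancel, leaving $O(\e^m)$, which is exactly the homogeneous moment condition defining $\MC_0(M)$. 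An entirely analogous cancellation between the integration-by-parts limit of $\Lie_X\langle u,\sk_{\cdot,\e}\rangle$ and the convergence of $\langle\Lie_X u,\sk_{\cdot,\e}\rangle$ yields condition (4); for $\skk\in\MC_0(M)$ the same computation applies with the homogeneous moment condition, and no cancellation is even needed since both parts are already $O(\e^m)$.

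I expect this lemma, and specifically the cancellation in condition (2), to be the main obstacle. It is exactly the conceptual reason for taking the geometric double Lie derivative $\Liesk_X=\dLie_{(X,X)}$ rather than $\Lief_X$ or $\Lien_X$ separately: neither $\Lief_X\sk$ nor $\Lien_X\sk$ alone lies in $\MC_0(M)$, only their sum does. For a non-complete $X$ the flow-based support argument is applied locally on $K$ for small $t$, which suffices because the Lie derivative is a local operator.
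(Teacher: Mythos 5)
Your proof is correct and takes the same (and in fact the only available) route as the paper, whose entire proof is the sentence ``This follows immediately from the definitions.'' What your write-up adds is precisely the content hidden in that ``immediately'': the Leibniz expansion of $d^j\bigl(dF(\sk)(\Liesk_X\sk)\bigr)$ and, crucially, the lemma that $\Liesk_X$ maps $\MC(M)$ into $\MC_0(M)$ and $\MC_0(M)$ into itself --- including the cancellation between the $\Lief_X$ and $\Lien_X$ contributions in conditions (2) and (4), which is exactly why the combined derivative $\Liesk_X=\dLie_{(X,X)}$ (rather than either piece alone) must appear in the definition of $\gLie_X$.
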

This follows immediately from the definitions.

We are finally in a position to define generalised functions on manifolds.
\begin{definition}[Generalised Functions]
The space
\begin{equation*}
\CGM(M) = {\CM(M) \over \CN(M) } 
\end{equation*}
is called the Colombeau algebra of generalised functions on $M$.
\end{definition}
 
\begin{theorem}\label{Col}
The space of Colombeau generalised functions $\CGM(M)$ is a fine sheaf of associative commutative differential algebras on $M$.
\end{theorem}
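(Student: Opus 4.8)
The plan is to verify the three assertions in turn — that $\CGM(M)$ is an associative commutative algebra, that it carries a differential (Lie-derivative) structure, and that it is a fine sheaf — each time reducing to results already established.

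First I would dispose of the algebra structure. The product on $\MEF(M)$ is defined pointwise in $x$ through the product of $C^\infty(M)$, so it is associative and commutative, and these properties descend to the subalgebra $\CM(M)$ and, by Theorem \ref{ideal}, to the quotient $\CGM(M)=\CM(M)/\CN(M)$. For the differential structure I would invoke the preceding theorem, which shows that $\gLie_X$ and $\Liet_X$ preserve both $\CM(M)$ and $\CN(M)$, so that each descends to a well-defined operator on $\CGM(M)$; it then remains to check they are derivations. The Leibniz rule $\gLie_X(F_1F_2)=(\gLie_X F_1)F_2+F_1(\gLie_X F_2)$ follows by splitting each term of the defining formula $(\gLie_X F)(\sk)=-dF(\sk)(\Liesk_X\sk)+\Lie_X(F(\sk))$: the product rule for the differential $d$ of the convenient calculus handles the first term and the ordinary Leibniz rule for $\Lie_X$ on $C^\infty(M)$ handles the second, while the case of $\Liet_X$ is the same computation with the $d$-term absent.

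Next I would turn to the sheaf axioms. The restriction maps $\CGM(M)\to\CGM(U)$ for open $U\subseteq M$ come directly from the locality built into $\MEF_{\mathrm{loc}}(M)$: since $F(\sk)(x)$ depends only on $\sk$ near $x$, a representative determines a value depending only on the behaviour of a smoothing kernel over $U$, yielding a well-defined class in $\CGM(U)$, and the presheaf compatibility is then immediate. Separation — a section negligible on every member of a cover is globally negligible — holds because both moderateness and negligibility are tested through suprema over compact sets $K$, and each such $K$ is covered by finitely many members of the cover, so the local estimates recombine into a global one; Proposition \ref{nulltest} moreover lets one ignore the derivative terms here. For gluing I would take classes $F_i\in\CGM(U_i)$ agreeing on all overlaps $U_i\cap U_j$, fix a smooth partition of unity $(\chi_i)$ subordinate to $(U_i)$ (available since $M$ is paracompact), and set $F\coleq\sum_i\sigma(\chi_i)F_i$, each summand extended by zero off $\supp\chi_i\subseteq U_i$; the sum is locally finite and moderate, and locality together with the compatibility on overlaps yields $F|_{U_j}=F_j$ for every $j$.

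Fineness then comes for free from the same partition of unity: the maps $\eta_i\colon F\mapsto\sigma(\chi_i)F$ are sheaf endomorphisms of $\CGM$ with support contained in $U_i$ and satisfy $\sum_i\eta_i=\id$ (since $\sum_i\chi_i=1$), which is exactly the defining property of a fine sheaf — equivalently, $\CGM$ is a module over the fine structure sheaf $C^\infty$ via $\sigma$. The hard part will be the sheaf axioms themselves, and specifically the interaction with supports: the smoothing kernels are \emph{global} objects, namely compactly supported $n$-forms on all of $M$, so passing between kernels over $M$ and over an open subset $U$ is delicate, and it is precisely the locality defining $\MEF_{\mathrm{loc}}(M)$ that makes the restriction well defined. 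The bulk of the remaining work is then checking that the glued element is moderate and that its restriction to each $U_j$ genuinely recovers $F_j$.
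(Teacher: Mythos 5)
Your treatment of the algebra and differential structure is correct and is essentially the paper's own argument: associativity and commutativity descend to the quotient because $\CM(M)$ is a subalgebra and $\CN(M)$ an ideal (Theorem \ref{ideal}), and both $\gLie_X$ and $\Liet_X$ descend because they stabilise $\CM(M)$ and $\CN(M)$; your explicit check of the Leibniz rule for $\gLie_X$ via the product rule for $d$ in the convenient calculus is a detail the paper leaves implicit, but it is fine. Likewise, fineness via the operators $F\mapsto\sigma(\chi_i)F$ attached to a partition of unity is the standard argument and is unproblematic \emph{once} the sheaf structure and the $C^\infty$-module structure are in place.

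The genuine gap is in the sheaf axioms, precisely the part you yourself defer as ``the bulk of the remaining work''. The paper does not prove these axioms directly either: it invokes the localisation results of \cite{specfull}, and what those results supply is exactly what your sketch is missing, namely compatibility of the moderateness and negligibility \emph{tests} with restriction to open subsets. Locality of $F\in\MEF_{\mathrm{loc}}(M)$ gives well-definedness of restriction only at the level of the basic space; to get maps $\CGM(M)\to\CGM(U)$ at all, you must show that a representative moderate (negligible) over $M$ --- where Definitions \ref{nullman} and \ref{moderateman} quantify over $\sk\in\MC(M)$, $\skk_i\in\MC_0(M)$, $u\in\cD'(M)$, $X_i\in\TM(M)$ and compacta in $M$ --- remains moderate (negligible) over $U$, where the quantifiers run over the corresponding objects \emph{on $U$}. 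These do not transfer trivially: a distribution on $U$ need not extend to $M$ (it may blow up at the boundary), a kernel in $\SK(U)$ need not extend to a global kernel satisfying conditions (1)--(4) of Definition \ref{kernels}, and a delta net on $M$ restricted to $x\in U$ only takes values in $\Omega^n_c(U)$ for small $\eps$ (via the shrinking-support condition), not for all $\eps$. Your separation argument (``local estimates recombine'') presupposes this comparison of $\MC(M)$ with $\MC(U_i)$ rather than proving it. The same issue undermines the gluing step as written: the ``extension by zero'' of $\sigma(\chi_i)F_i$ is not literal, since to evaluate it on a global kernel $\sk\in\SK(M)$ one must first cut $\sk_x$ off in the $y$-variable so that $F_i$ can accept it, and then show --- using condition (1) of Definition \ref{kernels} --- that the choice of cutoff alters the result only by an element of $\CN(M)$, and that the glued element restricts to $F_j$ modulo $\CN(U_j)$. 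This cutoff/comparison machinery is the actual mathematical content of the sheaf statement beyond Theorem \ref{ideal}; to close the gap you must either carry it out explicitly or, as the paper does, cite the localisation results of \cite{specfull}.
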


\begin{proof}
By construction the basic space $\MEF(M)$ is an associative commutative differential algebra, with derivative the generalised Lie derivative $\gLie$ given by equation \eqref{liealg}. $\CM(M)$ is a subalgebra of $\MEF(M)$ and $\CN(M)$ is an ideal in $\CM(M)$, hence $\CGM(M)$ is an algebra. Furthermore the spaces $\CM(M)$ and $\CN(M)$ are stable under both the generalised and ordinary Lie derivatives so that $\CGM(M)$ is a differential algebra with respect to the both Lie derivatives. The sheaf properties of $\CGM(M)$ follow from the localisation results \cite{specfull}.
\end{proof}

We now want to show that we may embed the space of distributions $\cD'(M)$ in the space of generalised functions $\CGM(M)$. Given a distribution $T$ in $\cD'(M)$ we define the function $\tilde T \in \MEF(M)$ by
\begin{equation*}
\tilde T(\sk)(x)=\langle T, \sk_x \rangle.
\end{equation*}
We now need to show that $\tilde T$ is moderate. For this we need to look at Lie derivatives of $d^j\tilde T(\sk)$. For $j=0$ we have $\tilde T(\sk_\eps)=\langle T, \sk_\eps \rangle$ and it then follows from property (3) of the smoothing kernels that
\[ \Lie_{X_1}\ldots\Lie_{X_k}(\tilde T(\sk_\eps))(x)  =\langle T, \Lief_{X_1}\ldots\Lief_{X_k}\sk_{x,\eps} \rangle = O(\eps^{-N}). \]
Since the embedding is linear, we have for $j=1$ that $(d\tilde T)(\sk)(\Psi)=\langle T, \Psi \rangle$, so the above argument gives the desired bound on the growth, while for $j \geq 2$ we have $d^j\tilde T=0$. This shows that $\tilde T$ is moderate and we have an embedding
\begin{align*}
\iota \colon \cD'(M) &\to \CGM(M) \\
T &\mapsto [\tilde T] 
\end{align*}
where $[\tilde T]$ is the equivalence class of $\tilde T$ in
$\CGM(M)$. 

By the definition of the generalised Lie derivative we have
\begin{align*}
\gLie_X(\iota T)(\sk)(x)
&=-\langle T, (\Liesk_X \sk)_x \rangle + \Lie_X \langle T, \sk_x \rangle \\
&=-\langle T, \Lien_X\sk_x \rangle \\
&=\langle \Lie_X T, \sk_x \rangle \\
&=\iota(\Lie_X T)(\sk)(x).
\end{align*}
Hence,
\begin{equation}
\gLie_X(\iota T)=\iota(\Lie_X T)
\end{equation}
and thus the embedding $\iota$ commutes with the generalised Lie derivative.

It is clear that if  $f$ is a smooth function on $M$ then $\hat f$ defined by $\hat f(\sk)(x)=f(x)$ is a moderate function. By passing to the equivalence class $[\hat f]$ we obtain the embedding $\sigma \colon C^\infty(M) \to \CGM(M)$ from above. Clearly $\sigma$ gives an injective algebra homomorphism of the algebra of smooth functions on $M$ into $\CGM(M)$, the algebra of generalised functions on $M$. Furthermore since $\sigma(f)$ has no dependence on $\sk$ we only have the second term in the formula for the definition of the generalised Lie derivative so $\sigma$ also commutes with the Lie derivative. Finally, it easily follows from Definition \ref{kernels} that for a smooth function the difference between $\tilde f$ and $\hat f$ is negligible and hence on passing to the quotient $\iota$ coincides with $\sigma$ on $C^\infty(M)$.

Collecting these results together we have the following theorem:
\begin{theorem}
$\iota \colon \cD'(M) \to \CGM(M)$ is a linear embedding that commutes with the generalised Lie derivative and coincides with $\sigma \colon C^\infty(M)\to \CGM(M)$ on $C^\infty(M)$. Thus $\iota$ renders $\cD'(M)$ a linear subspace and $C^\infty(M)$ a faithful subalgebra of $\CGM(M)$.
\end{theorem}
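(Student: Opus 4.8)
The plan is to assemble the four assertions from ingredients already established in the preceding discussion and to supply the one piece not yet verified, namely the \emph{injectivity} of $\iota$. Linearity is immediate from $\tilde T(\sk)(x)=\langle T,\sk_x\rangle$, since the pairing is linear in $T$; moderateness of $\tilde T$ (hence well-definedness of $[\tilde T]\in\CGM(M)$) was shown just above using property (3) of Definition \ref{kernels}; the identity $\gLie_X(\iota T)=\iota(\Lie_X T)$ was obtained by the displayed four-line computation; and the coincidence of $\iota$ with $\sigma$ on $C^\infty(M)$, together with the injectivity of $\sigma$ as an algebra homomorphism, have likewise been recorded. Granting injectivity of $\iota$, the concluding sentences follow formally: $\iota$ linear and injective makes $\cD'(M)$ a linear subspace, and since $\iota|_{C^\infty(M)}=\sigma$ is an injective algebra homomorphism, $C^\infty(M)$ embeds as a faithful subalgebra.

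The only real work is therefore to show $\ker\iota=0$, i.e.\ that $\tilde T\in\CN(M)$ forces $T=0$. I would fix one delta net $(\sk_\eps)_\eps\in\MC(M)$, which exists because $\MC(M)$ is non-void. Since $\tilde T$ is negligible, the case $j=k=0$ of Definition \ref{nullman} gives, for every compact $K\subset M$ and every $m\in\Nat_0$,
\[ \sup_{x\in K}\abso{\langle T,\sk_{x,\eps}\rangle}=O(\eps^{m})\qquad\text{as }\eps\to0. \]
Now fix an arbitrary test $n$-form $\omega\in\Omega^n_c(M)$ and put $K\coleq\supp\omega$. Integrating this pointwise bound against $\omega$ and using $\int_M\abso{\langle T,\sk_{x,\eps}\rangle}\,\abso{\omega}\le\bigl(\sup_{x\in K}\abso{\langle T,\sk_{x,\eps}\rangle}\bigr)\int_K\abso{\omega}$ shows that
\[ \int_{x\in M}\langle T,\sk_{x,\eps}\rangle\,\omega(x)\longrightarrow0\qquad\text{as }\eps\to0. \]
On the other hand, property (4) of Definition \ref{kernels} applied to $u=T$ identifies this same limit as $\langle T,\omega\rangle$. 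Hence $\langle T,\omega\rangle=0$ for every $\omega\in\Omega^n_c(M)$, so $T=0$.

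Conceptually the crux is the interplay of two notions of smallness: negligibility controls $\langle T,\sk_{x,\eps}\rangle$ \emph{pointwise}, uniformly on compacta, to \emph{all} orders in $\eps$, whereas property (4) controls only the \emph{weak} limit of the smoothed distribution. The step I expect to be the main (if mild) obstacle is exchanging these modes of convergence: one must use the uniform-on-$K$ form of the $O(\eps^m)$ estimate to pass it through the integral against $\omega$, and then recognise the resulting weak limit, via property (4), as the original pairing $\langle T,\omega\rangle$. Notably only the zeroth-order part of the negligibility definition enters, in the spirit of Proposition \ref{nulltest}; no derivative estimates are required. With the remaining assertions already in hand, this completes the argument.
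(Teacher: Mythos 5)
Your proposal is correct and, in its main body, coincides with the paper's own proof, which consists literally of the sentence ``Collecting these results together'': linearity of $\iota$, moderateness of $\tilde T$ via property (3) of Definition \ref{kernels}, the four-line computation giving $\gLie_X(\iota T)=\iota(\Lie_X T)$, and the identification $\iota|_{C^\infty(M)}=\sigma$ (via negligibility of $\tilde f-\hat f$) are all established in the preceding text and merely assembled. Where you go beyond the paper is in supplying an explicit proof of injectivity of $\iota$, which the paper never spells out even though it is needed both for the word ``embedding'' and for the claim that $\cD'(M)$ becomes a linear subspace. Your argument is the natural one and is sound: if $\tilde T\in\CN(M)$, the $j=k=0$ case of Definition \ref{nullman} gives $\sup_{x\in K}\abso{\langle T,\sk_{x,\eps}\rangle}=O(\eps^{m})$ for any delta net $\sk\in\MC(M)$ (such nets exist, as the paper asserts when it says $\MC(M)$ is non-void); the uniform bound on $K=\supp\omega$ passes through the integral against any $\omega\in\Omega^n_c(M)$; and property (4) of Definition \ref{kernels} identifies the resulting limit as $\langle T,\omega\rangle$, forcing $T=0$. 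Equivalently, in the paper's later terminology: property (4) says every $\iota T$ admits $T$ as associated distribution, while a negligible element is associated to $0$, and weak limits are unique. In short, your proof follows the same route as the paper's but fills in, correctly, the one verification the paper leaves implicit.
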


As explained in the introduction the concept of association is an important feature of the theory of Colombeau algebras on manifolds as in many cases it allows us to recover a description in terms of classical distributions by a method of `coarse graining'. We now show how this notion may be extended to generalised functions on manifolds.
\begin{definition}[Association]
We say an element $[F]$ of $\CGM(M)$ is associated to $0$ (denoted $[F] \approx 0$) if for each $\omega \in \Omega^n_c(M)$ we have
\begin{equation*}
\lim_{\eps \to 0}\int_{x \in M}F(\sk_{\eps})(x)\omega(x)=0 \quad \forall \sk \in \MC(M). 
\end{equation*}
We say two elements $[F],[G]$ are associated and write $[F]\approx [G]$ if $[F-G] \approx 0$.
\end{definition}

\begin{definition}[Associated distribution]
We say $[F] \in \CGM(M)$ admits $u \in \cD'(M)$ as an associated distribution if for each $\omega \in \Omega^n_c(M)$  we have
\begin{equation*}
\lim_{\eps \to 0}\int_{x \in M}F(\sk_{\eps})(x)\omega(x)=\langle u, \omega \rangle \quad \forall \sk \in \MC(M). 
\end{equation*}
\end{definition}
Again, these definitions do not depend on the representative of the class. As in $\Real^n$ at the level of association we regain the usual results for multiplication of distributions, provided that suitable $L^1$-conditions like \eqref{l1cond} and \eqref{l1cond2} are used.

\begin{proposition} \label{assocprop}
\leavevmode
\begin{enumerate}[label=(\alph*)]
 \item If $f \in C^{\infty}(M)$ and $T \in \cD'(M)$ then 
\begin{equation*}
\iota(f)\iota(T) \approx \iota(fT). 
\end{equation*}
\item If $f, g \in C^{0}(M)$ then 
\begin{equation*}
\iota(f)\iota(g) \approx \iota(fg).  
\end{equation*}
\end{enumerate}
\end{proposition}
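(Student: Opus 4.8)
The plan is to reduce each statement to showing that the relevant integral against a test form $\omega \in \Omega^n_c(M)$ vanishes in the limit. Setting $K \coleq \supp \omega$, for part (a) I would fix $\sk \in \MC(M)$ and write the integrand of the associated difference as
\[ \langle f, \sk_{x,\eps}\rangle\langle T, \sk_{x,\eps}\rangle - \langle fT, \sk_{x,\eps}\rangle, \]
then add and subtract $f(x)\langle T, \sk_{x,\eps}\rangle$ in order to split $\int_M[\cdots]\,\omega$ into a \emph{main part}
\[ \int_M f(x)\langle T, \sk_{x,\eps}\rangle\,\omega(x) - \int_M \langle fT, \sk_{x,\eps}\rangle\,\omega(x) \]
and an \emph{error part} $\int_M [\langle f, \sk_{x,\eps}\rangle - f(x)]\langle T, \sk_{x,\eps}\rangle\,\omega(x)$.

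For the main part I would invoke condition (4) of Definition \ref{kernels} twice: once for the distribution $T$ with the test form $f\omega \in \Omega^n_c(M)$, giving $\int_M \langle T, \sk_{x,\eps}\rangle\,(f\omega)(x) \to \langle T, f\omega\rangle$, and once for the distribution $fT$ with the test form $\omega$, giving $\int_M \langle fT, \sk_{x,\eps}\rangle\,\omega(x) \to \langle fT, \omega\rangle$; since $\langle fT, \omega\rangle = \langle T, f\omega\rangle$ these limits coincide and the main part tends to $0$. For the error part the key is that $f$ is smooth: taking $k=0$ in condition (2) gives $\sup_{x\in K}\abso{\langle f, \sk_{x,\eps}\rangle - f(x)} = O(\eps^m)$ for \emph{every} $m$, while taking $k=0$ in condition (3) applied to $T$ gives $\sup_{x\in K}\abso{\langle T, \sk_{x,\eps}\rangle} = O(\eps^{-N})$ for some fixed $N$. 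Choosing $m = N+1$ makes the integrand $O(\eps)$ uniformly on $K$, so integrating against the bounded compactly supported $\omega$ shows the error part tends to $0$, establishing $\iota(f)\iota(T) \approx \iota(fT)$.

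For part (b) only continuity is available, so the moment estimate (2) cannot be used and I would instead lean on the $L^1$-conditions. Using \eqref{l1cond} I would first record that for continuous $h$ one has $\langle h, \sk_{x,\eps}\rangle = \int_M h\,\sk_{x,\eps} \to h(x)$ pointwise, and moreover $\abso{\langle h, \sk_{x,\eps}\rangle} \le (\sup_K\abso{h})\int_M\abso{\sk_{x,\eps}}$ stays uniformly bounded on $K$ for small $\eps$ since $\int_M\abso{\sk_{x,\eps}}\to 1$. Applying this to $f$, $g$ and $fg$ shows that both $\langle f, \sk_{x,\eps}\rangle\langle g, \sk_{x,\eps}\rangle$ and $\langle fg, \sk_{x,\eps}\rangle$ converge pointwise to $f(x)g(x)$ and are uniformly bounded on $K$. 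Dominated convergence on the compact set $K$ then forces both $\int_M\langle f, \sk_{x,\eps}\rangle\langle g, \sk_{x,\eps}\rangle\,\omega$ and $\int_M\langle fg, \sk_{x,\eps}\rangle\,\omega$ to the common limit $\int_M f(x)g(x)\,\omega(x)$, so their difference vanishes and $\iota(f)\iota(g) \approx \iota(fg)$.

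The main obstacle is the error part of (a): the factor $\langle T, \sk_{x,\eps}\rangle$ may genuinely diverge like $\eps^{-N}$, so the mere uniform convergence $\langle f, \sk_{x,\eps}\rangle \to f(x)$ would not suffice. It is precisely the smoothness of $f$, through the to-all-orders moment estimate (2), that produces decay fast enough to beat this growth. In (b), where only continuity holds, one cannot reproduce this and must instead trade the moment condition for the $L^1$-normalisation \eqref{l1cond} together with a dominated-convergence argument; this is the structural reason the proposition requires the additional $L^1$-conditions.
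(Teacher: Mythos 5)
Your proof is correct, but there is no proof in the paper to compare it against: the paper states Proposition \ref{assocprop} without argument, remarking only that the results hold ``provided that suitable $L^1$-conditions like \eqref{l1cond} and \eqref{l1cond2} are used'' and deferring full proofs to \cite{distcurv}. Judged on its own merits, your argument is sound. In part (a), the splitting into a main part (handled by applying condition (4) of Definition \ref{kernels} twice, to $T$ against the test form $f\omega$ and to $fT$ against $\omega$) and an error part (handled by playing the all-orders estimate of condition (2) against the fixed $O(\eps^{-N})$ growth from condition (3)) is exactly the right mechanism; it is worth noting that this part uses no $L^1$-condition at all, which sharpens the paper's blanket remark --- the extra hypotheses are genuinely needed only for part (b), as your closing paragraph correctly identifies. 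Two small points in part (b). First, the uniform bound should read $\abso{\langle h, \sk_{x,\eps}\rangle} \le \bigl(\sup_{K_r}\abso{h}\bigr)\int_M\abso{\sk_{x,\eps}}$, where $K_r$ is a compact neighbourhood of $K$ containing the supports $\supp\sk_{x,\eps}\subseteq B_r(x)$ for $x\in K$ and small $\eps$ (this is what condition (1) provides), not $\sup_K\abso{h}$. Second, the pointwise convergence $\langle h,\sk_{x,\eps}\rangle \to h(x)$ for merely continuous $h$, which you quote, is indeed recorded in the paper right after \eqref{l1cond}, so citing it is legitimate; but be aware that it is not a consequence of \eqref{l1cond} alone --- its proof needs condition (1) (shrinking supports plus uniform continuity of $h$ on a compact neighbourhood), condition (2) applied to the constant function $1$ (giving the normalisation $\int_M \sk_{x,\eps}\to 1$), and \eqref{l1cond} (bounding the total variation). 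With these cosmetic repairs your argument is complete and supplies the proof the paper omits.
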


The above results establish almost everything we want at the scalar level. Before going on to look at the tensor theory and develop a theory of differential geometry there is one further ingredient we will require, which is the notion of directional (or covariant) derivative $\nabla_XF$ of a generalised scalar field. Ideally this would be $C^\infty(M)$-linear in $X$ (so that $\nabla_{fX}F=f\nabla_XF$) and commute with the embedding. However, it is not hard to see that this is not possible since this would require that
\begin{align*}
\iota(f\nabla_Xg) &= \iota(\nabla_{fX}g) \cr
&= \nabla_{fX}\iota(g) \cr
&= f\nabla_X(\iota g) \cr
&=\iota(f)\iota(\nabla_X g)
\end{align*}
which cannot in general be true by the Schwartz impossibility result. However, in view of Proposition \ref{assocprop} a $C^\infty(M)$-linear derivative that commutes with the embedding only at the level of association is not ruled out by the Schwartz result.  

By thinking of $F(\sk)(x)$ for fixed $\sk$ as a function of $x$ we may make the following definition of generalised covariant derivative
\begin{definition}[Covariant derivative of a generalised scalar field]
Let $F \in \CGM(M)$ be a generalised scalar field and $X$ a smooth vector field. Then we define the covariant derivative $\tilde \nabla_XF$ by
\[ (\tilde \nabla_XF)(\sk)=\nabla_X(F(\sk)). \]
\end{definition}
We note that almost by definition this satisfies the requirements of a covariant derivative and for the case of a scalar field (which we are considering here) this is identical to the Lie derivative $\Liet_XF$ given by \eqref{lie1} and hence is well defined. Although it is $C^\infty(M)$-linear in $X$ this derivative does not commute with the embedding into $\CGM(M)$. However as we now show this derivative {\it does commute} with the embedding at the level of {\it association}.

\begin{proposition}
Let $T \in \cD'(M)$ and $X$ be a smooth vector field; then
\begin{equation}
\tilde \nabla_X\iota(T)=\Liet_X\iota(T) \approx \iota(\Lie_XT)=\iota(\nabla_XT)
\end{equation}
\end{proposition}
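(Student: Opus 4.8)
The plan is to treat the two flanking equalities as definitional and to put all the work into the central association. The equality $\tilde\nabla_X\iota(T)=\Liet_X\iota(T)$ holds because on a scalar the covariant derivative is just the directional derivative, which is $\Liet_X$ by definition \eqref{lie1}; likewise $\iota(\Lie_X T)=\iota(\nabla_X T)$ since $\nabla_X T=\Lie_X T$ for the scalar distribution $T$. So everything reduces to showing $\Liet_X\iota(T)\approx\iota(\Lie_X T)$, i.e.\ that the class $\Liet_X\iota(T)-\iota(\Lie_X T)$ is associated to zero.

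First I would pin down a workable representative of this difference. From the identity $(\Liet_X-\gLie_X)F(\sk)=dF(\sk)(\Liesk_X\sk)$, together with the commutation $\gLie_X\iota(T)=\iota(\Lie_X T)$ established above and the fact that $\iota$ is linear in $\sk$ so that $d(\iota T)(\sk)(\Liesk_X\sk)(x)=\langle T,(\Liesk_X\sk)_x\rangle$, I obtain
\[ \bigl(\Liet_X\iota(T)-\iota(\Lie_X T)\bigr)(\sk)(x)=\langle T,(\Liesk_X\sk)_x\rangle. \]
Evaluating on a delta net $(\sk_\eps)_\eps\in\MC(M)$ and splitting the geometric Lie derivative as $\Liesk_X=\Lief_X+\Lien_X$, the parameter part comes straight out of the pairing, $\langle T,(\Lief_X\sk_\eps)_x\rangle=\Lie_X^{(x)}\langle T,\sk_{x,\eps}\rangle$, while the form part transfers onto $T$ by the duality definition of the distributional Lie derivative, $\langle T,(\Lien_X\sk_\eps)_x\rangle=-\langle\Lie_X T,\sk_{x,\eps}\rangle$.

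Finally I would verify association by integrating this representative against an arbitrary $\omega\in\Omega^n_c(M)$ and sending $\eps\to0$. Setting $F_\eps(x)\coleq\langle T,\sk_{x,\eps}\rangle$, the parameter term contributes $\int_M(\Lie_X F_\eps)\,\omega$; since $F_\eps\omega$ is a compactly supported top-form, Cartan's formula and Stokes' theorem give $\int_M\Lie_X(F_\eps\omega)=\int_M d\,\iota_X(F_\eps\omega)=0$, so this term equals $-\int_M F_\eps\,\Lie_X\omega=-\int_M\langle T,\sk_{x,\eps}\rangle\,(\Lie_X\omega)(x)$. Condition (4) of Definition~\ref{kernels} now applies twice: once to $T$ tested against $\Lie_X\omega\in\Omega^n_c(M)$ and once to $\Lie_X T$ tested against $\omega$, producing the limit $-\langle T,\Lie_X\omega\rangle-\langle\Lie_X T,\omega\rangle$. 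This vanishes precisely because $\langle\Lie_X T,\omega\rangle=-\langle T,\Lie_X\omega\rangle$ is the definition of $\Lie_X T$. The main obstacle is bookkeeping rather than depth: one must justify commuting the action of $T$ with the $t$-derivative defining $\Lief_X$ and with the $x$-integration, and note that $\Lie_X\omega$ stays compactly supported so that both the Stokes step and condition (4) are legitimate; once these are granted, the two boundary-type limits cancel automatically, forced by the very duality defining the distributional Lie derivative.
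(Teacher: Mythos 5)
Your proof is correct and follows essentially the same route as the paper's: the analytic core in both is moving $\Lie_X$ onto the test form via Stokes' theorem, then applying condition (4) of Definition \ref{kernels} together with the duality definition $\langle \Lie_X T,\omega\rangle=-\langle T,\Lie_X\omega\rangle$ (the paper applies condition (4) once explicitly, with $\Lie_X\mu$ as the test form, and once implicitly through the association of $\iota(\Lie_X T)$ to $\Lie_X T$). The only difference is cosmetic: your detour through $\gLie_X$ and the splitting $\Liesk_X=\Lief_X+\Lien_X$ to obtain the representative of the difference is unnecessary, since $\bigl(\Liet_X\iota(T)-\iota(\Lie_X T)\bigr)(\sk)(x)=\Lie_X\langle T,\sk_x\rangle-\langle\Lie_X T,\sk_x\rangle$ is immediate from the definitions of $\Liet_X$ and $\iota$.
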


\begin{proof}
In the following calculation let $\sk_{\eps}$ be a fixed delta net of smoothing kernels. Given $\mu$ a smooth $n$-form of compact support then
\begin{align*}
 \lim_{\e \to 0} \int_M ( \tilde \Lie_X \iota(T))(\sk_\e) \mu & = \lim_{\e \to 0} \int_M ( \Lie_X (\iota T(\sk_\e))) \mu \\
&= - \lim_{\eps \to 0}\int_M (\iota T)(\sk_\e)(\Lie_X \mu) \\
&= - \lim_{\eps \to 0}\int_M \langle T, \sk_{x,\eps} \rangle (\Lie_X \mu)(x) \\
&= \langle T, -\Lie_X\mu \rangle \\\
&= \langle \Lie_XT, \mu \rangle. \qedhere
\end{align*}
\end{proof}

\section{Conclusion}

In this paper we have reviewed the construction of the Colombeau algebra on $\Real^n$ and adapted it to define the Colombeau algebra on a manifold $M$. The key idea has been to look at the construction on manifolds first of all in terms of smoothing operators and then translate this into the language of smoothing kernels. The result of this is to replace the mollifiers $\mol(\y-\x)$ by smoothing kernels $\sk_x(y)$ and the scaled mollifiers $\mol_\eps(\y-\x)$ by delta nets of smoothing kernels $\sk_{x,\eps}(y)$.  In this way, given a locally integrable function $f$ we may approximate it by a 1-parameter family of smooth functions (depending on $\sk$) according to
\[ \tilde f_\eps(x)=\int_{y \in M}f(y)\sk_{x,\eps}(y). \]
For fixed $\sk \in \MC(M)$ these may be treated just like smooth functions on manifolds so all the standard operations that may be carried out on smooth functions extend to the smoothed functions $\tilde f_\eps$.  The embedding extends to distributions $T \in \cD'(M)$ by defining $\tilde T_\eps(x)=\langle T, \sk_{x,\eps}\rangle$. The nets of smoothing kernels tend to $\delta_x$ as $\eps \to 0$ and by using the rate at which this happens to introduce a grading $\MC(M)$ on the smoothing kernels we have a condition which corresponds to the vanishing moment condition on $\Real^n$.  We can therefore define the spaces of moderate and negligible functions which allows us to define $\CGM(M)$ as the quotient $\CGM(M)=\CM(M)/\CN(M)$.  The algebra of generalised functions $\CGM(M)$ contains the space of smooth functions as a subalgebra and has the space of distributions as a canonically embedded linear subspace. We also introduced the generalised Lie derivative which commutes with the embedding and makes $\CGM(M)$ into a differential algebra. Finally we defined the covariant derivative of generalised scalar fields on the manifold $M$ and showed that this commutes with the distributional (covariant) derivative at the level of association. In a subsequent paper \cite{diffgeom} this theory will be extended to a nonlinear theory of tensor distributions on a manifold $M$ where this is used to develop a theory of nonlinear distributional geometry.

{\bfseries Acknowledgments.} E.~Nigsch was supported by grants P26859 and P30233 of the Austrian Science Fund (FWF).

\end{document}